\documentclass{amsart}
\usepackage{amsthm,amsfonts,amsmath,amscd,amssymb,latexsym,epsfig}
\usepackage[centredisplay,Postscript=dvitops]{diagrams}
\newcommand{\gl}{{\mathfrak g \mathfrak l}}

\newcommand{\su}{{\mathfrak s  \mathfrak u}}

\newcommand{\g}{{\mathfrak g}}         

\newcommand{\cx}{{\mathbb C}}
\newcommand{\diag}{\operatorname{diag}}

\newcommand{\tr}{\operatorname{tr}}

\newcommand{\im}{\operatorname{Im}}

\newcommand{\Lie}{\operatorname{Lie}}

\newcommand{\ol}{\overline}

\numberwithin{equation}{section}

\newtheorem{theorem}{Theorem}[section]

\newtheorem{lemma}[theorem]{Lemma}

\newtheorem{corollary}[theorem]{Corollary}

\newtheorem{proposition}[theorem]{Proposition}

\theoremstyle{remark}

\newtheorem{remark}[theorem]{Remark}

\newtheorem{example}[theorem]{Example}

\newcommand{\oH}{{\mathbb{H}}}

\newcommand{\oN}{{\mathbb{N}}}

\newcommand{\oP}{{\mathbb{P}}}

\newcommand{\oR}{{\mathbb{R}}}



\newcommand{\sG}{{\mathcal{G}}}   

\newcommand{\sM}{{\mathcal{M}}}   

\newcommand{\sO}{{\mathcal{O}}}

\newcommand{\sS}{{\mathcal{S}}}

\newcommand{\sZ}{{\mathcal{Z}}}

\newcommand{\fL}{{\mathfrak{l}}}

\newcommand{\fS}{{\mathfrak{s}}}

\begin{document}

\title[Slices, Atiyah-Hitchin, and Hilbert schemes of points]{Slices to sums of adjoint orbits, the Atiyah-Hitchin manifold, and Hilbert schemes of points}
\author{Roger Bielawski}
\address{Institut f\"ur Differentialgeometrie\\
Universit\"at Hannover\\ Welfengarten 1\\ D-30167 Hannover}


\begin{abstract} We show that the regular Slodowy slice to the sum of two semisimple adjoint orbits of $GL(n,\cx)$ is isomorphic to the deformation of the $D_2$-singularity if $n=2$, the Dancer deformation of the double cover of the Atiyah-Hitchin manifold if $n=3$, and to the Atiyah-Hitchin manifold itself if $n=4$. For higher $n$, such slices to the sum of two orbits, each having only two distinct eigenvalues, are either empty or biholomorphic to open subsets of the Hilbert scheme of points on of one the above surfaces. In particular, these open subsets of Hilbert schemes of points carry complete hyperk\"ahler metrics. In the case of the double cover of the Atiyah-Hitchin manifold this metric turns out to be the natural $L^2$-metric on a hyperk\"ahler submanifold of the monopole moduli space.  
\end{abstract}

\maketitle

\thispagestyle{empty}

Let $G$ be a compact Lie group with Lie algebra $\g$ and let $(e,h,f)$ be an $\fS\fL(2,\cx)$-triple in $\g^\cx$. The affine subspace $S(f)=f+C(e)$ of $\g^\cx$, where $C(e)$ denotes the centraliser of $e$, is called the Slodowy slice to the nilpotent orbit of $f$ \cite{Slo}. It has the remarkable property of being transverse to any adjoint orbits it meets. Let $M$ be a hyperk\"ahler manifold with a tri-Hamiltonian action of $G$ and let $\mu:M\to \g^\cx$ be the complex moment map for one of the complex structures. It has been shown in \cite{Bie1} (extending results of Kronheimer \cite{Kron} on slices to nilpotent orbits) that, under mild assumptions on $M$, $\mu^{-1}(S(f))$ carries a natural hyperk\"ahler structure. Moreover, as we show in the present paper, the hyperk\"ahler metric on $\mu^{-1}(S(f))$ is complete if the original metric on $M$ is complete.
\par
A remarkable number of hyperk\"ahler manifolds arise as such Slodowy slices to simpler hyperk\"ahler manifolds: 
\begin{itemize} 
\item the moduli space of  $SU(2)$-monopoles of charge $k$ is the regular slice to $T^\ast GL(k,\cx)$ for the $U(k)\times U(k)$-action \cite{Bie1} ("regular slice" means that the nilpotent orbit of $f$ is regular);
\item ALE gravitational instantons are subregular slices to regular semisimple adjoint orbits \cite{Bie1,Bie2} (this has been also rediscovered by Manolescu \cite{Mano} in the case of $A_{2m}$ ALE spaces and by Jackson \cite{Jack} for other ALE spaces of types A and D);
\item ALF gravitational instantons of type $D_k$, $k\geq 3$, are regular slices to regular semisimple adjoint orbits of $SL(k,\cx)$ for the action of $SU(2)\times SU(k-2)\subset SU(k)$ \cite{Cher-Kap};
\item the $D_2$ ALF gravitational instanton is the regular slice to the product of two semisimple adjoint orbits of $SL(2,\cx)$ \cite{Bie2}.  
\end{itemize} 
As seen from these examples, slices to adjoint orbits provide a particularly rich source of hyperk\"ahler manifolds. As far as we know, however, slices to products of two or more orbits have not been investigated, apart from the last example. 
Since the complex moment map of an orbit $\sO$ is the embedding $\sO\hookrightarrow \gl(n,\cx)$, such a slice is the affine variety
$$ \left\{(A_1,\dots,A_k)\in \prod_{i=1}^k \sO_i;\; A_1+\dots +A_k\in S(f)\right\}.
$$
It makes therefore sense to speak about a slice to the {\em sum} of  orbits.
\par
In the present paper we make the first step and consider regular
Slodowy slices to the sum of two adjoint semisimple orbits of $GL(n,\cx)$, each orbit having only two eigenvalues with multiplicities $k_i$ and $l_i$ ($k_i+l_i=n$), $i=1,2$. They are nonempty only if $j=|k_1-l_1|+|k_2-l_2|\leq 2$, and in this case we obtain three series of complete hyperk\"ahler manifolds depending on the value of $j=0,1,2$. These three series turn out to be related to the first three ALF gravitational instantons of type $D_k$, i.e. the Atiyah-Hitchin manifold ($D_0$) if $j=2$, Dancer's deformation of its double cover ($D_1$) if $j=1$, and Hitchin's \cite{Hi} $D_2$-manifold if $j=0$. All three are affine surfaces given by respective equations
$$x^2-zy^2+y=0,$$
$$x^2-zy^2 -1 +\alpha y=0,$$
$$ x^2-zy^2+x+\alpha y+\beta =0.
$$ 
We show that these surfaces and their hyperk\"ahler metrics are realised as slices to the sum of two orbits in $GL(4,\cx)$ for the Atiyah-Hitchin manifold, in $GL(3,\cx)$ for its double cover, and in $GL(2,\cx)$ for the deformation of the $D_2$-singularity (in the last case, the complex structure, but not the metric, has been identified in \cite{Bie2}).
\par
For higher $n$ we identify the nonempty slices as open subsets of Hilbert schemes of points on these surfaces. These open subsets consist of those schemes $Z$ of points for which the projection $\pi$ onto the $z$-coordinate in the above equations induces an isomorphism between $Z$ and its scheme-theoretic image $\pi(Z)$. We call this open subset the {\em Hilbert scheme of points transverse to $\pi$}. The construction of these goes back to Atiyah and Hitchin \cite[Ch. 6]{AH}, who realised spaces of based rational maps on $\oP^1$ as such transverse Hilbert schemes. As observed by Atiyah and Hitchin, if we start with a hyperk\"ahler $4$-manifold, then we can apply the transverse Hilbert scheme construction fibrewise to its twistor space and obtain a new twistor space, which may lead to a new hyperk\"ahler metric. This does not always work: we show (\S\ref{C2pi}) that if we start with flat $\cx^2$ and $\pi(x,y)=xy$, then the new twistor space has no twistor lines. It does however work in case of the $D_0,D_1$ 
and 
$D_2$ surfaces with $\pi(x,y,z)=z$ and we show that the hyperk\"ahler metric on the transverse Hilbert scheme of $m$ points obtained from the slice construction coincides with the one obtained from the fibrewise twistor construction.  In the case of the double cover of the  Atiyah-Hitchin manifold, we can identify this hyperk\"ahler metric
as the natural metric on certain hyperk\"ahler submanifold of the moduli space of $SU(2)$-monopoles of charge $2m$. 
\par
Incidentally, the transverse Hilbert scheme of points on the ALE surfaces of type $A_k$ ($k\geq 1$) and $D_k$ ($k\geq 3$) have been identified as slices to single semisimple adjoint orbits by Seidel and Smith \cite{SS}, Manolescu \cite{Mano} and Jackson \cite{Jack} (the identification is only as complex manifolds but I believe that going through the identification fibrewise on the twistor space will show that the hyperk\"ahler metrics are those arising from the fibrewise Hilbert scheme construction). Hence, there arises the natural question whether the fibrewise transverse Hilbert scheme construction applied to twistor spaces of arbitrary ALF gravitational instantons also produces  complete hyperk\"ahler metrics. We show here that this is the case for type $A_0$, i.e. for the Taub-NUT metric (\S\ref{C2pi}). Since the ALF gravitational instantons of type $A_k$, $k\geq 1$, arise as a Taub-NUT modification of the corresponding ALE spaces, we expect that a combination of the construction for the Taub-NUT metric 
and for the $A_k$ ALE spaces will answer the question positively in this case. For ALF gravitational instantons of type $D_k$ there is a whole unexplored world of possibilities among non-regular slices to sums of two orbits or regular slices to sums of three or more orbits and slices to orbits in other simple Lie groups.
\par
The article is organised as follows. In the next section we recall the hyperk\"ahler slice construction from \cite{Bie1}. In section \ref{reg} we discuss slices to sums of two orbits, each having only two eigenvalues, identify those pairs of orbits for which the regular slice is nonempty and define the three series of hyperk\"ahler manifolds described above (depending on the value of $j\in\{0,1,2\}$). In sections \ref{Ex} and \ref{metric} we identify the slices in the case of orbits in $GL(2,\cx)$, $GL(3,\cx)$, and $GL(4,\cx)$, i.e. the initial members of the three series.        In sections \ref{Hilbpi} and \ref{C2pi} we discuss transverse Hilbert schemes of points and show that there exist complete hyperk\"ahler metrics on such Hilbert schemes of points for the Taub-NUT metric.  In section \ref{HilbDk} we show that nonempty slices to sums of two orbits (each orbit having only two eigenvalues) in $GL(n,\cx)$ with $n>4$ are transverse Hilbert schemes of points on the $D_0$, $D_1$ or $D_2$ surface 
(depending on whether the value of $j$ defined above is $2$, $1$ or $0$). We also show there that the complete hyperk\"ahler metrics on these, which arise from the hyperk\"ahler slice construction, have twistor spaces obtained via the fibrewise transverse Hilbert scheme construction. Finally, in the appendix, we show that a hyperk\"ahler slice to a complete hyperk\"ahler manifold is again complete.

\medskip

{\bf Acknowledgement.} This research has been motivated by Tam\'as Hausel pointing out to the author that the regular slice to the sum of two minimal adjoint orbits in $SL(3,\cx)$ is a complex surface and conjecturing that it should be the $D_1$-manifold. In fact, the hyperk\"ahler spaces studied in this paper are also examples of hyperk\"ahler metrics recently constructed  by Tam\'as Hausel, Michael Wong, and Dimitri Wyss  \cite{HWW} on open de Rham spaces of irregular connections on trivial bundles on the projective line. I warmly thank Tam\'as Hausel for the discussions we had during the meeting {\em Metric and Analytic Aspects of Moduli Spaces} at the Isaac Newton Institute in July/August 2015. I also thank the organisers of that meeting for the invitation, a superb scientific programme, and a stimulating research atmosphere, and the Isaac Newton Institute itself for great working conditions.

\section{Slodowy slices and hyperk\"ahler metrics\label{slices}}

Let $\g$ be a compact Lie algebra and $\rho:\su(2)\to\g $ a homomorphism of Lie algebras. We extend $\rho$ to a homomorphism of complexified Lie algebras and denote by $(e,h,f)$ the resulting $\fS\fL(2,\cx)$-triple. The {\em Slodowy slice} \cite{Slo} corresponding to $\rho$ is the affine subspace $S(\rho)=f+C(e)$ of $\g^\cx$, where $C(e)$ denotes the centraliser of $e$. It has the remarkable property of being transverse to any adjoint orbits it meets.
\par
As shown in \cite{Bie1}, the manifold $S(\rho)\times G^\cx$, where $G$ is a compact Lie group with Lie algebra $\g$, carries a natural  hyperk\"ahler metric. It is described (see Appendix) as the natural $L^2$-metric on the moduli space of $\g$-valued solutions to Nahm's equations on the interval $(0,1]$ (rescaling the interval corresponds to rescaling the metric) with simple poles having residues determined by $\rho$ at $t=0$. Two solutions are identified if they differ by a gauge transformation which is identity at both $t=0$ and $t=1$. This hyperk\"ahler metric on $S(\rho)\times G^\cx$ admits a free tri-hamiltonian (in particular isometric) action of $G$ (given by gauge transformations with arbitrary values at $t=1$) and an isometric $SO(3)$-action rotating the complex structures. In particular, each complex structure of this hyperk\"ahler manifold is that of $S(\rho)\times G^\cx$. The completeness of the hyperk\"ahler metric on $S(\rho)\times G^\cx$ was not shown in \cite{Bie1} - an omission which we 
rectify in the appendix to this paper:
\begin{theorem} The natural hyperk\"ahler metric on $S(\rho)\times G^\cx$ is complete.\label{complete}
\end{theorem}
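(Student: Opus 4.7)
The plan is to show that any $L^2$-Cauchy sequence of classes $[m_n]$ in $\sM_\rho := S(\rho)\times G^\cx$ admits a convergent representative at the level of Nahm data. A point of $\sM_\rho$ is a gauge-equivalence class of $\g$-valued solutions $\mathbf{T}=(T_0,T_1,T_2,T_3)$ to Nahm's equations on $(0,1]$ with a simple pole of prescribed residue (determined by $\rho$) at $t=0$, and the metric is the induced $L^2$-inner product on horizontal deformations. To prove completeness it is enough, given a Cauchy sequence of classes, to choose lifts whose components converge smoothly on each compact subinterval $[a,1]\subset (0,1]$ to a solution with the correct pole at $t=0$ and boundary value at $t=1$, since the gauge-equivalence class of this solution will then be the required limit in $\sM_\rho$.

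First I would fix the temporal gauge $T_0^{(n)}\equiv 0$, which exhausts the gauge freedom vanishing at both endpoints modulo constants; the residual constant freedom is pinned down by normalising the value at $t=1$ within a bounded region of $G^\cx$ singled out by the Cauchy sequence. In this gauge Nahm's equations reduce to the ODE system $\dot T_i^{(n)} = [T_j^{(n)}, T_k^{(n)}]$ (cyclic). The key analytic step is to upgrade the $L^2$-Cauchy property of the lifts to uniform estimates: from Nahm's equations one derives a differential inequality of the form
\[
\tfrac{d}{dt}\sum_i |T_i^{(n)}-T_i^{(m)}|^2 \;\leq\; C(t)\sum_i |T_i^{(n)}-T_i^{(m)}|^2,
\]
with $C(t)$ integrable on $[a,1]$ provided the $T_i^{(n)}$ are uniformly bounded on that interval. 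Gronwall's inequality, combined with continuity of the boundary value map at $t=1$, then turns $L^2$-Cauchy into $C^0$-Cauchy on $[a,1]$, and bootstrapping through the equations gives $C^k$-Cauchy on compact subsets of $(0,1]$. Arzel\`a--Ascoli produces a smooth Nahm limit.

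The main obstacle is extending this analysis down to the singular endpoint $t=0$: \emph{a priori}, small $L^2$-perturbations could produce lifts with wildly different subleading behaviour near the pole, so the uniform bounds needed on $[a,1]$ might degenerate as $a\to 0$. To control this I would use the $\ad\rho(h)$-weight decomposition of $\g^\cx$ together with the Kronheimer--Bielawski normal form developed in \cite{Kron,Bie1}: after a suitable small-$t$ gauge transformation, a Nahm solution with a pole of type $\rho$ is uniquely determined by its regular part, which lies in a fixed $\ad h$-weight complement of $\im \ad f$. The logarithmic substitution $s=-\log t$ then recasts the singular ODE as a regular one with constant linear part $\ad\rho(h)$, so standard invariant-manifold-type arguments provide uniform $C^0$-control near $t=0$ in terms of this regular data, which is itself Cauchy by the interior estimates. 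This forces the lifts to remain uniformly bounded on all of $(0,1]$ and to converge to a Nahm solution with the correct residue, whose gauge-equivalence class is the desired limit in $\sM_\rho$.
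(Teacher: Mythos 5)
There is a genuine gap at the very first step: you treat the $L^2$-Cauchy property of your chosen lifts as given, but this is precisely what has to be proved. The metric on $S(\rho)\times G^\cx$ is a quotient metric (infimum of lengths of horizontal paths in the space of Nahm data), so Cauchyness of the classes $[m_n]$ does not by itself imply that any particular gauge-fixed representatives are $L^2$-close. Your gauge choice makes this worse rather than better: the moduli space is the quotient by gauge transformations with $g(0)=g(1)=1$, and such transformations cannot in general achieve $T_0\equiv 0$ (one would need the holonomy of $T_0$ on $[0,1]$ to be trivial), so the temporal gauge is not a slice for the relevant gauge group, and the assertion that it ``exhausts the gauge freedom vanishing at both endpoints modulo constants'' is false. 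The paper's proof supplies exactly the missing mechanism: it realises $N(\rho)$ as the hyperk\"ahler quotient of the flat Hilbert manifold $\sM(\rho)$ of quadruples $T_i=\alpha_i/t+X_i$, $X_i\in L^2\bigl((0,1),\g\bigr)$, by a Hilbert Lie group of $W^{1,2}$ gauge transformations (Hardy's inequality is what makes the fundamental vector fields $L^2$), and proves a horizontal lifting lemma for curves. With that lemma, a Cauchy sequence in $N(\rho)$ can be represented by a Cauchy sequence in the ambient Hilbert manifold; the limit exists there, is a \emph{weak} solution to Nahm's equations (the level set is closed in $L^2$), and the distance-decreasing property of the Riemannian submersion gives convergence in $N(\rho)$. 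Without some substitute for the lifting lemma your argument does not start.

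A second, related problem is your treatment of the pole. You yourself identify the danger that small $L^2$ perturbations can have wildly different subleading behaviour near $t=0$, and you propose to overcome it by a normal-form/invariant-manifold analysis to get uniform $C^0$ control down to $t=0$. Such pointwise control is both doubtful from $L^2$ data alone and unnecessary: in the paper's framework the boundary condition is encoded simply as $T_i-\alpha_i/t\in L^2$, which passes to $L^2$ limits automatically, and the identification of the moduli space of weak solutions modulo $W^{1,2}$ gauge transformations with $N(\rho)$ (from \cite{Bie1}) removes any need for smooth convergence up to the singular endpoint. Your interior Gronwall/bootstrap argument on $[a,1]$ is fine as far as it goes, but it is doing work the Hilbert-space completeness of $\sM(\rho)$ gives for free, while the two essential points --- producing $L^2$-Cauchy representatives and concluding convergence in the quotient metric --- are not addressed.
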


Let now $M$ be a hyperk\"ahler manifold with a tri-Hamiltonian action of $G$. We can then form the hyperk\"ahler quotient of $M\times S(\rho)\times G^\cx$ by $G$ which is always smooth and, in view of the above theorem, complete, if the metric on $M$ is complete. Moreover, as shown in \cite{Bie1}, the complex structure of this hyperk\"ahler quotient is, under mild assumptions, easily identified:
\begin{theorem}\cite{Bie1} Let $(M,g,I_1,I_2,I_3)$ be a hyperk\"ahler manifold with a tri-hamiltonian action of a compact Lie group $G$. Let $I$ be one of the complex structures and $\mu:M\to \g^\cx$ the corresponding $I$-holomorphic moment map (i.e. $\mu=\mu_2+i\mu_3$ if $I=I_1$). Assume that the action of $G$ extends to an $I$-holomorphic action of $G^\cx$ and that $M$ admits a global K\"ahler potential for the K\"ahler form $g(I\cdot,\cdot)$ which is bounded below on each $G^\cx$-orbit. Then the hyperk\"ahler quotient of $M\times S(\rho)\times G^\cx$ by $G$ is biholomorphic, with respect to the complex structure $I$, to $\mu^{-1}\bigl(S(\rho)\bigr)$.\hfill $\Box$ \label{theorem}
\end{theorem}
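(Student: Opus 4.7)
The plan is to use the standard principle that a hyperkähler quotient, viewed through one of its complex structures, coincides with the Kähler quotient of the complex moment map level set by the complexified group. Fix $I=I_1$, so that $\mu=\mu_2+i\mu_3$ is the $I$-holomorphic moment map on $M$. The hyperkähler manifold $S(\rho)\times G^\cx$ carries a holomorphic $G^\cx$-action (gauge transformations with arbitrary value at $t=1$) whose $I$-complex moment map is, in the natural trivialisation, the map $(X,g)\mapsto \Ad(g)X$. Imposing vanishing of the combined holomorphic moment map on the product $M\times S(\rho)\times G^\cx$ gives
\[
\mu(m)+\Ad(g)X=0,
\]
and the diagonal holomorphic $G^\cx$-action on the product then exhibits this level set as a trivial principal $G^\cx$-bundle over $\mu^{-1}\bigl(S(\rho)\bigr)$, an explicit trivialisation being $(m,X,g)\mapsto\bigl(g^{-1}\cdot m,\,X\bigr)$ via the hypothesised holomorphic $G^\cx$-action on $M$.

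It remains to quotient by the real form $G$ together with the real moment map equation. Here I would invoke the Kempf--Ness type statement that, under the hypothesis that the Kähler form $g(I\cdot,\cdot)$ admits a global potential bounded below on each $G^\cx$-orbit, every $G^\cx$-orbit in the complex moment map level set meets the zero set of the real moment map $\mu_1$ in a single $G$-orbit. The hyperkähler structure on $S(\rho)\times G^\cx$ itself carries a natural Kähler potential (the $L^2$-energy of the underlying Nahm data) which one checks is bounded below on $G^\cx$-orbits, so the sum with the given potential on $M$ provides a $G$-invariant, strictly convex exhaustion of each $G^\cx$-orbit in the product. Its unique-modulo-$G$ critical point is the sought zero of $\mu_1$, and combining this with the principal bundle description above produces a holomorphic bijection between the hyperkähler quotient and $\mu^{-1}\bigl(S(\rho)\bigr)$. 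That this bijection is in fact a biholomorphism is a formal consequence of the $G^\cx$-equivariance of all the maps involved and of the holomorphicity of $\mu$.

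The main obstacle is the Kempf--Ness step, specifically the existence of a critical point of the potential on each $G^\cx$-orbit intersected with the complex moment map level set: strict convexity along geodesics in $G^\cx/G$ is a formal consequence of the identity relating the Kähler potential to $\mu_1$, but to guarantee that the potential attains its infimum one needs precisely the boundedness hypothesis together with a properness argument along orbits. Granted this existence, uniqueness modulo $G$ follows from the convexity, and the identification with $\mu^{-1}\bigl(S(\rho)\bigr)$ reduces to routine bookkeeping.
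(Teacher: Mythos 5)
Your route is essentially the one the paper relies on: the theorem is quoted from \cite{Bie1}, and the argument there (and the proof of the Proposition immediately following the theorem in this paper) is exactly the two steps you describe — exhibit the zero set of the complex moment map on $M\times S(\rho)\times G^\cx$ as a trivial holomorphic principal $G^\cx$-bundle over $\mu^{-1}\bigl(S(\rho)\bigr)$ (your trivialisation $(m,X,g)\mapsto (g^{-1}\cdot m, X)$ is fine up to the sign convention in $\mu(m)+\Ad(g)X=0$), and then show by a Kempf--Ness type argument that each $G^\cx$-orbit in this level set meets the zero set of the real moment map in exactly one $G$-orbit. The one substantive point you leave open is the existence of the minimiser, and your formulation of it is slightly off: you assert that the natural potential on $S(\rho)\times G^\cx$ (the $L^2$-norm of the Nahm data) is merely \emph{bounded below} on $G^\cx$-orbits and that the sum with the potential on $M$ is therefore a convex exhaustion — but boundedness below of both summands does not give properness, so as written this step would fail. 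What \cite{Bie1} actually establishes (and what the present paper invokes again in the proof of its Proposition, via \cite[\S 4]{Bie1} and \cite[\S 6.5.2]{DoKr}) is that the potential on the slice factor is \emph{proper} on each $G^\cx$-orbit; combined with the assumed lower bound for the potential on $M$, the total potential is then proper on each $G^\cx$-orbit of the level set, so its infimum is attained and yields the required zero of $\mu_1$, with uniqueness modulo $G$ following from convexity along geodesics of $G^\cx/G$ as you indicate. With that properness supplied from the slice factor, your outline matches the cited proof.
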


We shall refer to the hyperk\"ahler quotient of $M\times S(\rho)\times G^\cx$ as a {\em hyperk\"ahler slice to $M$}. In the current paper we shall be interested in the case $G=U(n)$, $(e,h,f)$ a regular $\fS\fL(2,\cx)$-triple and $M$ a product of flat space and adjoint $GL(n,\cx)$-orbits with their Kronheimer-Biquard-Kovalev (KBK) hyperk\"ahler structure \cite{Kr,Kron, Biq,Ko}. 
In the case of slices to adjoint orbits, we need to be careful the identification of the complex-symplectic and hyperk\"ahler quotients described in the above theorem, since the K\"ahler form of the KBK metric does not need to admit a global K\"ahler potential\footnote{KBK-metrics on a given complex adjoint orbit are parameterised by an element $\tau_1$ of certain abelian subalgebra of $\g$. Only the KBK-metric with $\tau_1=0$ admits a global K\"ahler potential.} (it may define a nonzero $H^{1,1}$-cohomology class).
The following proposition allows us to identify the complex structure of slices to arbitrary semisimple adjoint orbits.
\begin{proposition} Let $M$ be a hyperk\"ahler quotient a finite-dimensional quaternionic vector space $V$ by a closed subgroup $H$ of $Sp(V)$, and let $G$ be a closed subgroup of $Sp(V)$ commuting with $H$. Let $I$ be one of the complex structures and $\nu:V \to \frak{h}^\cx$ the $I$-holomorphic moment map. Assume that $H$ acts freely on $\nu^{-1}(\tau)$ and
the $H^\cx$-orbits in $\nu^{-1}(\tau)$ are closed, where $\tau$ is the value used to obtain $M$. Then the hyperk\"ahler quotient of $M\times S(\rho)\times G^\cx$ by $G$ is biholomorphic, with respect to the complex structure $I$, to $\mu^{-1}\bigl(S(\rho)\bigr)$, where $\mu:M\to \g^\cx$ is the corresponding $I$-holomorphic moment map.
\end{proposition}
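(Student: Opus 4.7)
The strategy is to reduce to Theorem \ref{theorem} by applying it not to $M$ itself but to the flat quaternionic vector space $V$, and then to perform the residual $H$-quotient as a second, purely complex-symplectic, step. The point is that flat space $V$ always admits the global K\"ahler potential $\|v\|^2/2$, which is bounded below by $0$ on every orbit; the hypothesis about K\"ahler potentials in Theorem \ref{theorem} therefore holds trivially on $V$. Since $G$ is a closed subgroup of $Sp(V)$, its action on $V$ extends to a holomorphic $G^\cx$-action with respect to $I$, so all assumptions of Theorem \ref{theorem} are met for $V$ in place of $M$.

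Applying Theorem \ref{theorem} to $V$ with its $G$-action, the hyperk\"ahler quotient
$$ N := \bigl(V\times S(\rho)\times G^\cx\bigr)/\!/\!/G $$
is biholomorphic, with respect to $I$, to $\mu_V^{-1}\bigl(S(\rho)\bigr)$, where $\mu_V\colon V\to \g^\cx$ is the $I$-holomorphic moment map for the $G$-action. Because $H$ commutes with $G$ and acts trivially on $S(\rho)\times G^\cx$, it descends to a tri-Hamiltonian action on $N$, whose complex moment map is the restriction of $\nu$ to $\mu_V^{-1}(S(\rho))\subset V$. One checks that the biholomorphism produced in Theorem \ref{theorem} (via solutions of Nahm's equations) is natural in commuting group actions, hence $H$-equivariant.

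I now perform reduction in stages. On the one hand, taking the hyperk\"ahler quotient of $V\times S(\rho)\times G^\cx$ by $H\times G$ in the order "$H$ first, then $G$" recovers the desired hyperk\"ahler slice of $M=V/\!/\!/H$, i.e.
$$\bigl(M\times S(\rho)\times G^\cx\bigr)/\!/\!/G.$$
On the other hand, doing "$G$ first, then $H$" gives the hyperk\"ahler quotient of $N$ by $H$ at level $\tau$. Using the $I$-holomorphic identification $N\cong \mu_V^{-1}(S(\rho))$ and the hypotheses that $H$ acts freely on $\nu^{-1}(\tau)$ and that the $H^\cx$-orbits there are closed, the hyperk\"ahler quotient of $N$ by $H$ at $\tau$ coincides, as a complex manifold, with the complex-symplectic quotient
$$\Bigl(\mu_V^{-1}\bigl(S(\rho)\bigr)\cap \nu^{-1}(\tau)\Bigr)\big/H^\cx.$$
Because the moment map $\mu\colon M\to \g^\cx$ is induced from $\mu_V$ by $G$-invariance, this last space is precisely $\mu^{-1}(S(\rho))\subset M$, which yields the claimed biholomorphism.

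The main technical obstacle is the careful execution of hyperk\"ahler reduction in stages for the non-compact commuting action of $H\times G$ on the non-complete space $V\times S(\rho)\times G^\cx$, together with verifying the $H$-equivariance of the biholomorphism from Theorem \ref{theorem}. The equivariance is a consequence of the fact that the construction of solutions of Nahm's equations depends only on the $G$-data $(e,h,f)$ and is functorial in any commuting tri-Hamiltonian action; the stages argument is standard once one knows, as is the case here, that the first-stage level set is a smooth $H$-invariant submanifold on which the residual $H$-action is tri-Hamiltonian and has closed $H^\cx$-orbits on the complex level set, so that the second-stage complex quotient is geometric.
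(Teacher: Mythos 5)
Your overall strategy (pass to the flat space $V$ and use reduction in stages for the commuting $G\times H$-action) is close in spirit to the paper's proof, which also replaces the quotient of $M\times S(\rho)\times G^\cx$ by $G$ with the quotient of $V\times S(\rho)\times G^\cx$ by $G\times H$. But there is a genuine gap at your second stage. You assert that the hyperk\"ahler quotient of $N$ by $H$ at $\tau$ ``coincides, as a complex manifold, with the complex-symplectic quotient'' merely because $H$ acts freely on $\nu^{-1}(\tau)$ and the $H^\cx$-orbits there are closed. That inference is exactly the nontrivial Kempf--Ness-type statement, and freeness plus closedness of orbits do not imply it: one must show that every closed $H^\cx$-orbit in the complex level set actually contains a point of the real moment-map level set. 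A closed orbit in a (non-compact, non-algebraic) K\"ahler manifold need not meet a prescribed level of the real moment map --- e.g.\ $\cx^\ast$ with the flat form and the rotation action is a single closed orbit on which the moment map omits values --- so some properness input is indispensable. The paper supplies it by working upstairs on $V\times S(\rho)\times G^\cx$ with the full $G^\cx\times H^\cx$-action: the proper K\"ahler potential on $V$ yields, as in \cite[\S4]{Bie1}, a global K\"ahler potential on the complex level set $Z$ which is proper on each $G^\cx\times H^\cx$-orbit, and then the stability arguments of \cite[\S6.5.2]{DoKr} show every orbit in $Z$ is stable, whence the hyperk\"ahler and complex-symplectic quotients agree. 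In your formulation the second stage takes place on $N$ with its quotient hyperk\"ahler metric, for which you have exhibited no K\"ahler potential (let alone one proper or bounded below on $H^\cx$-orbits); the biholomorphism $N\cong\mu_V^{-1}(S(\rho))$ is not an isometry, so nothing of this sort transfers automatically from $V$.

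Two smaller points: the $H$-equivariance of the biholomorphism of Theorem \ref{theorem}, which you invoke, is plausible but only gestured at, and would need to be checked on the Nahm-equation model; and the hypotheses of the proposition concern $H^\cx$-orbits in $\nu^{-1}(\tau)\subset V$, so even the closedness you use at the second stage has to be transported to the relevant level set (this is harmless, but in the paper it is the combined $G^\cx\times H^\cx$-orbits in $Z$ whose closedness is needed and deduced from the assumptions). If you repair the missing Kempf--Ness step by running the potential/stability argument upstairs for $G\times H$ simultaneously, your proof essentially collapses to the paper's one-step argument rather than a genuinely different route.
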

\begin{proof} The hyperk\"ahler quotient of $M\times S(\rho)\times G^\cx$ by $G$ is isomorphic to the hyperk\"ahler quotient of $V\times S(\rho)\times G^\cx$ by $G\times H$. The  statement is equivalent to saying that this second hyperk\"ahler quotient is biholomorphic, with respect to the complex structure $I$, to the complex-symplectic quotient of $V\times S(\rho)\times G^\cx$ by $G^{\cx}\times H^{\cx}$. The assumptions imply that all $G^{\cx}\times H^{\cx}$ orbits in the level set $Z$ of the complex moment map are closed and that $G\times H$ acts freely there. Moreover $V$ has a proper K\"ahler potential for the complex structure $I$. It follows, as in \cite[\S4]{Bie1}, that $Z$ has a global K\"ahler potential which is proper on each $G^\cx\times H^\cx$-orbit. The arguments in \cite[\S6.5.2]{DoKr} imply now that every  $G^\cx\times H^\cx$-orbit in $Z$ is stable and hence the hyperk\"ahler and complex-symplectic quotients coincide.
\end{proof}

Hyperk\"ahler metrics on  adjoint orbits of $GL(n,\cx)$ can be obtained, as shown by Nakajima \cite{Nak2} (see also \cite{KS}),  as hyperk\"ahler quotients of a quaternionic vector space by a product of unitary groups, and the assumption about the closedness of $H^\cx$-orbits is fulfilled in the case of semisimple orbits.
Thus we conclude the above result holds for any product $\sO_1\times\dots \times \sO_k$ of semisimple adjoint orbits of $GL(n,\cx)$ (it also holds for nilpotent orbits, thanks to Theorem \ref{theorem}). This means that the hyperk\"ahler slice to this product is biholomorphic to the affine variety
$$\left\{(A_1,\dots,A_k)\in \prod_{i=1}^k \sO_i;\; A_1+\dots +A_k\in S(\rho)\right\}.
$$

As mentioned above, we shall only consider the regular slice, i.e. the case where $\rho$ defines an irreducible representation of $\su(2)$. In this case, the Slodowy slice is isomorphic to the set $\sS_n$ of companion matrices
\begin{equation}\begin{pmatrix} 0 & \dots & \dots & 0  & s_n\\ 1 & \ddots & & 0 & s_{n-1}\\ \vdots & \ddots &\ddots & \vdots &\vdots\\ 0 &  \dots &\ddots & 0&  s_2\\  0 & \dots &\dots & 1 & s_1\end{pmatrix}.\label{S}\end{equation}
We shall denote the manifold $\sS_n\times GL(n,\cx)$ with its natural hyperk\"ahler metric by $N_n$. We shall need later a description of the twistor space of
$N_n$, which has already been found in \cite{BieGM} (see the proof of Proposition 6.1 there). It is given by gluing two copies of $\cx\times \sS_n\times GL(n,\cx)$ with coordinates $(\zeta,S,g)$ and $(\tilde{\zeta},\tilde S,\tilde{g})$ via
\begin{equation}
\tilde{\zeta}=\zeta^{-1},\quad\tilde S= D(\zeta) S D(\zeta)^{-1}/\zeta^{2},\quad \tilde g=g \exp(-S/\zeta)  D(\zeta)^{-1},
\label{twistor}
\end{equation}
where $D(\zeta)=\diag\bigl(\zeta^{-n+1},\zeta^{-n+3},\dots,\zeta^{n-1}\bigr)$.

\section{Regular slices to sums of two orbits\label{reg}}

We consider two semisimple adjoint orbits $\sO_1,\sO_2$ of $GL(n,\cx)$ both having only two different eigenvalues (i.e. orbits which are also complex symmetric spaces). After translating by
multiples of the identity matrix we can assume that $\sO_1$ and $\sO_2$ have eigenvalues $\mu_1,-\mu_1$ with multiplicities $k_1,l_1$ and $\mu_2,-\mu_2$ with multiplicities $k_2,l_2$,
respectively. Thus $A^2=\mu_i^2$ for $A\in \sO_i$ and matrices $A$ satisfying this equation fall into different orbits according to the value of $\tr A$.
Let us write $d_1=k_1-l_1$ and $d_2=k_2-l_2$, so that $\tr A=d_1 \mu_1$, $\tr B=d _2 \mu_2$. The numbers $d_1$ and $d_2$ have the same parity as $n$.
\par
The regular Slodowy slice to the sum of these two orbits is the variety 
\begin{equation} \bigl(\sO_1+\sO_2)\cap \sS_n=\left\{(A,B)\in \sO_1\times \sO_2;\; A+B\in \sS_n\right\},\label{var}\end{equation}
where $\sS_n$ is the set of companion matrices \eqref{S}. As described in the previous section, this variety has a natural hyperk\"ahler structure and it arises a complex-symplectic quotient by a free action of a Lie group and is therefore smooth. Moreover, were it nonempty, it would have dimension 
\begin{equation}(n^2-k_1^2-l_1^2)+(n^2-k_2^2-l_2^2)+n-n^2=n-\frac{(d_1)^2}{2}-\frac{(d_2)^2}{2}.\label{dim}\end{equation}
However:
\begin{proposition} Suppose that $|d_1|+|d_2|> 2$. Then the variety \eqref{var} is empty. 
\label{empty}\end{proposition}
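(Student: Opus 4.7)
The plan is to derive a contradiction via a two-sided rank estimate on $A+B-\lambda I$ for well-chosen $\lambda\in\cx$. The lower bound comes from the fact that every companion matrix $X\in\sS_n$ is non-derogatory (its minimal polynomial equals its characteristic polynomial), so each eigenspace is one-dimensional and therefore $\rank(X-\lambda I)\geq n-1$ for every $\lambda\in\cx$. In particular, if $A+B\in\sS_n$, this must hold with $X=A+B$.

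The upper bound comes from the subadditivity of rank: for any $\alpha,\beta\in\cx$ with $\alpha+\beta=\lambda$,
\[\rank\bigl((A+B)-\lambda I\bigr)\leq \rank(A-\alpha I)+\rank(B-\beta I).\]
The efficient choice is to take $\alpha\in\{\mu_1,-\mu_1\}$ and $\beta\in\{\mu_2,-\mu_2\}$. Since $A$ is semisimple with eigenvalues $\pm\mu_1$ of multiplicities $k_1,l_1$, the matrix $A-\mu_1 I$ has rank $l_1$ and $A+\mu_1 I$ has rank $k_1$, and similarly for $B$. This produces four upper bounds, namely $k_1+k_2$, $k_1+l_2$, $l_1+k_2$, and $l_1+l_2$; the smallest is $\min(k_1,l_1)+\min(k_2,l_2)$. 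Combining with the lower bound $n-1$ at the corresponding $\lambda$ yields
\[n-1\leq \min(k_1,l_1)+\min(k_2,l_2)=\frac{n-|d_1|}{2}+\frac{n-|d_2|}{2},\]
that is $|d_1|+|d_2|\leq 2$, whose contrapositive is the statement of the proposition.

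There is no real obstacle here: the linear-algebra observation that a cyclic matrix satisfies $\rank(X-\lambda I)\geq n-1$ is the only substantive ingredient, and once it is in hand everything reduces to choosing the right $\lambda=\pm\mu_1\pm\mu_2$. The argument implicitly uses $\mu_1,\mu_2\neq 0$, which is guaranteed by the hypothesis that each orbit has two genuinely distinct eigenvalues, so that $A\pm\mu_1 I$ (respectively $B\pm\mu_2 I$) really do contribute two different rank values $k_1$ and $l_1$ (respectively $k_2$ and $l_2$).
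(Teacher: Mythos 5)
Your proof is correct and is essentially the paper's argument: the paper also bounds $\rank\bigl(A+B-(\mu_1+\mu_2)I\bigr)$ below by $n-1$ (phrased via the subdiagonal ones of translates of $f+C(e)$ rather than via non-derogatoriness of companion matrices, which is the same fact) and above by $l_1+l_2$ using rank subadditivity, after the WLOG normalisation $k_i\geq l_i$ that replaces your minimisation over the four sign choices.
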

\begin{proof} We can replace $\sS_n$ by $f+C(e)$, i.e. by matrices with $s_{ij}=0$  if $i>j+1$, $s_{i+1,i}=1$, and $s_{i,i+r}$ depending only on $r$. In particular $A+B\in f+C(e)$ if an only if $(A+aI)+(B+bI)\in f+C(e)$ for any $a,b\in \cx$. Without loss of generality, we can assume that $k_i\geq l_i$, $i=1,2$. The matrices $A-\mu_1 I$ and $B-\mu_2 I$ have ranks $\leq l_1$, $\leq l_2$, respectively. Since their sum has rank $\geq n-1$, the conclusion follows. 
\end{proof}
\begin{remark} From the point of view of hyperk\"ahler geometry, the fact that \eqref{var} is empty in the case $\frac{(d_1)^2}{2}+\frac{(d_2)^2}{2}\leq n$ and $|d_1|+|d_2|> 2$ is interesting. It means that $N_n\times \sO_1\times\sO_2$ is an example of a complete hyperk\"ahler manifold with a tri-Hamiltonian action of $SU(n)$, such 
the image $W$ of the hyperk\"ahler moment map  is open, but $0\not \in W$.  I am not aware of any other examples of this kind.
\end{remark}

The variety \eqref{var} can therefore be nonempty only if $|d_1|+|d_2|\leq 2$. Since changing $\mu_i$ to $-\mu_i$ is equivalent to replacing $d_i$ with $-d_i$, we shall not lose any generality in assuming that $d_i\geq 0$, $i=1,2$. Similarly, we shall not lose any any generality in assuming that $d_1\geq d_2$. Furthermore, the $d_i$ have the same parity as $n$, and hence there are only three possibilities:
\begin{itemize}
\item $n=2m$ and $d_1=d_2=0$. We shall denote the corresponding variety by $D_{2,m}(\mu_1,\mu_2)$.
\item $n=2m+1$ and $d_1=d_2=1$. The corresponding variety will be denoted by $D_{1,m}(\mu_1,\mu_2)$.
\item $n=2m+2$ and $d_1=2$, $d_2=0$. The corresponding variety will be denoted by $D_{0,m}(\mu_1,\mu_2)$.
\end{itemize}
The reasons for this notation will become apparent. We shall also see that these varieties are nonempty for each $m\geq 1$, and therefore the dimension calculation \eqref{dim} gives $2m$ as the dimension of each of them. Since the orbits $\sO_1$ and $\sO_2$ involved in the definition of $D_{2,m}(\mu_1,\mu_2)$
do not change if we change the sign of $\mu_1$ or of $\mu_2$, we conclude that $D_{2,m}(\mu_1,\mu_2)$ depends only on $\mu_1^2$ and $\mu_2^2$. Similarly $D_{0,m}(\mu_1,\mu_2)=D_{0,m}(\mu_1,-\mu_2)$. We shall see later that $D_{1,m}(\mu_1,\mu_2)$ depends only on $\mu_1-\mu_2$ and $D_{0,m}(\mu_1,\mu_2)$ is independent (up to an affine isomorphism)  of $\mu_1$ and $\mu_2$.

\subsection{Characteristic polynomial of $A+B$}

If $(A,B)$ belong to the variety \eqref{var}, then $S=A+B$ is a companion matrix \eqref{S}. Recall that
the characteristic polynomial $P(z)=\det(z-S)$ of a companion matrix is equal to $z^n-\sum_{i=1}^n s_iz^{n-i}$ and $S$ is the matrix of the multiplication by $z$ on $\cx[z]/(P(z))$ in the basis $1,z,\dots,z^{n-1}$.
\par
We shall need the following algebraic result.
\begin{proposition} Let $A$ and $B$ be two $n\times n$ matrices satisfying $A^2=\mu_1^2$, $B^2=\mu_2^2$, $\tr A=d_1\mu_1$, $\tr B=d_2\mu_2$, $\mu_1,\mu_2\neq 0$, $d_1\geq d_2\geq 0$. If $S=A+B$ is a regular matrix, then the characteristic polynomial of $S$ has the form:
\begin{equation} \left(z-(\mu_1+\mu_2)\right)^{p} \left(z-(\mu_1-\mu_2)\right)^{q}\prod_{i=1}^r(z^2-x_i),
\label{char}\end{equation}
where $p=\frac{d_1+d_2}{2}$, $q=\frac{d_1-d_2}{2}$, and $x_i\in \cx$. 
\label{alg}
\end{proposition}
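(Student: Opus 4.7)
The plan is to regard $\cx^n$ as a finite-dimensional module over the algebra $R=\cx\langle \epsilon_1,\epsilon_2\rangle/(\epsilon_1^2-1,\epsilon_2^2-1)$, where $\epsilon_1=A/\mu_1$ and $\epsilon_2=B/\mu_2$, so that $\tr\epsilon_i=d_i$ and $S=\mu_1\epsilon_1+\mu_2\epsilon_2$. The key structural identity is $\epsilon_1 T\epsilon_1=T^{-1}$ with $T=\epsilon_1\epsilon_2$, which pairs up $T$-eigenspaces for $\lambda$ and $\lambda^{-1}$.

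First I would classify the finite-dimensional simple $R$-modules. If $M$ is simple and $v\in M$ satisfies $Tv=\lambda v$, then $T(\epsilon_1 v)=\lambda^{-1}\epsilon_1 v$, so $\mathrm{span}(v,\epsilon_1 v)$ is both $\epsilon_1$- and $T$-invariant, hence $R$-invariant. Simplicity forces $\dim M\leq 2$, and a brief case analysis on whether $\epsilon_1 v\in\cx v$ shows that the simple $R$-modules are either one-dimensional of types $(a,b)\in\{\pm 1\}^2$ (with $\epsilon_i$ acting as $a,b$), or two-dimensional with $T$-eigenvalues $\lambda,\lambda^{-1}$ for some $\lambda\neq\pm 1$.

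Next I would pick a composition series of $\cx^n$ and triangularise $S$ in an adapted basis, so that $\chi_S(z)$ is the product of the characteristic polynomials on the successive simple subquotients. On a one-dimensional factor of type $(a,b)$, $S$ acts as $a\mu_1+b\mu_2$. On a two-dimensional factor, the matrix of $S$ in the basis $(v,\epsilon_1 v)$ is anti-diagonal with entries $\mu_1+\mu_2\lambda^{-1}$ and $\mu_1+\mu_2\lambda$, giving characteristic polynomial $z^2-(\mu_1^2+\mu_2^2+\mu_1\mu_2(\lambda+\lambda^{-1}))$. Writing $m_{ab}$ for the multiplicity of the $(a,b)$ simple in the composition series, one obtains
\[
\chi_S(z)=\prod_{(a,b)\in\{\pm 1\}^2}(z-a\mu_1-b\mu_2)^{m_{ab}}\cdot\prod_j(z^2-x_j).
\]

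Finally I would use the trace identities $\tr\epsilon_i=d_i$ (noting that two-dimensional simples contribute zero) to deduce $m_{++}-m_{--}=(d_1+d_2)/2=p$ and $m_{+-}-m_{-+}=(d_1-d_2)/2=q$. The hypothesis $d_1\geq d_2\geq 0$ ensures $p,q\geq 0$, so one can regroup
\[
(z-\mu_1-\mu_2)^{m_{++}}(z+\mu_1+\mu_2)^{m_{--}}=(z-\mu_1-\mu_2)^p\,(z^2-(\mu_1+\mu_2)^2)^{m_{--}},
\]
and analogously for the $(+,-)/(-,+)$ pair; the residual quadratic factors merge into the product $\prod_i(z^2-x_i)$, producing the claimed form. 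The main subtlety is the classification of simple $R$-modules combined with the fact that, although finite-dimensional $R$-modules need not be semisimple, characteristic polynomials are still multiplicative along a composition series once $S$ is written block-triangularly in an adapted basis.
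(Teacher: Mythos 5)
Your argument is correct, and it takes a genuinely different route from the paper's. The paper works directly with $S$ and $Y=\frac{1}{2}(A-B)$: from the anticommutation relation $SY+YS=\mu_1^2-\mu_2^2$ and the regularity of $S$ it pairs the generalized eigenspaces of $\lambda$ and $-\lambda$ (a somewhat delicate computation with $(S+\lambda)^kYv$ to match algebraic multiplicities), treats separately the eigenvectors that are common eigenvectors of $A$ and $B$ (which give $\lambda=\pm\mu_1\pm\mu_2$) and the eigenvalue $\lambda=0$, and then fixes the exponents by a trace comparison that invokes Proposition \ref{empty} to reduce to the three cases $(d_1,d_2)\in\{(0,0),(1,1),(2,0)\}$. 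You instead regard $\cx^n$ as a module over the group algebra of $\Z/2\ast\Z/2$ generated by $\epsilon_1=A/\mu_1$, $\epsilon_2=B/\mu_2$, classify the simples (one-dimensional of type $(a,b)$, or two-dimensional with $T=\epsilon_1\epsilon_2$ acting with eigenvalues $\lambda^{\pm1}$, on which $S$ is anti-diagonal, hence traceless with even characteristic polynomial), and obtain the exponents by trace bookkeeping along a composition series. What your route buys: it never uses regularity of $S$, needs no case analysis on $(d_1,d_2)$ and no appeal to Proposition \ref{empty}, and it handles $\lambda=0$ and the degenerate case $\mu_1^2=\mu_2^2$ uniformly. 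What the paper's route buys: it is elementary matrix algebra, and its auxiliary Lemmata \ref{common} and \ref{zero} are reused later (in the proof of Proposition \ref{can}) to pin down the form of $Y$, which your module-theoretic argument does not directly supply. If you write this up, make explicit that trace and characteristic polynomial are computed blockwise in a basis adapted to the composition series, and that the regrouping $(z-\alpha)^{m_1}(z+\alpha)^{m_2}=(z-\alpha)^{m_1-m_2}(z^2-\alpha^2)^{m_2}$ needs $m_1\ge m_2$, which is exactly where the hypothesis $d_1\ge d_2\ge 0$ enters.
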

\begin{proof}
We observe that
$$ \mu_1^2=A^2=(S-B)^2=S^2-SB-BS+\mu_2^2,$$
which we can write as
$$ S(\frac{1}{2}S-B)+(\frac{1}{2}S-B)S=\mu_1^2-\mu_2^2.$$
We abbreviate $Y=\frac{1}{2}S-B$ and $\tau=\mu_1^2-\mu_2^2$, so that the equation we are concerned with is 
\begin{equation} SY+YS=\tau.\label{anti}\end{equation} 
\begin{lemma} If $\lambda\neq 0$ is an eigenvalue of $S$ and the corresponding eigenvector $u$ satisfies $Yu\neq \frac{\tau}{2\lambda}u$, then $-\lambda$ is also an eigenvalue of $S$ with the same algebraic multiplicity as $\lambda$.
\end{lemma}
\begin{proof} 
Suppose that $Su=\lambda u$. Then $ SYu=-\lambda Yu+\tau u$ and it follows that if $\lambda\neq 0$, then 
$$S\left(Yu-\frac{\tau}{2\lambda} u\right)=-\lambda\left(Yu-\frac{\tau}{2\lambda} u\right).$$
By assumption $-\lambda$ is also an eigenvalue of $S$ with eigenvector $Yu-\frac{\tau}{2\lambda}u$.
\par
Since $S$ is regular, the geometric multiplicities of $\lambda$ and $-\lambda$ are both equal to $1$.
Let $m$ be the algebraic multiplicity of $\lambda$,  i.e. there is vector $v$ such that $(S-\lambda)^{m-1}v\neq 0$ and $(S-\lambda)^{m}v=0$. Denote by $E_\lambda$ the kernel of $(S-\lambda)^m$, i.e. the subspace spanned by $v,(S-\lambda)v,\dots, (S-\lambda)^{m-1}v$. The $\lambda$-eigenspace is generated by $u=(S-\lambda)^{m-1}v$ and the $-\lambda$-eigenspace by $Yu-\frac{\tau}{2\lambda} u$. Consider first the case $\tau=0$. Then $Y(S-\lambda)^{m-1}v$ is an $-\lambda$-eigenvector and, since $Y(S-\lambda)^{m-1}v=(-S-\lambda)^{m-1}Yv$, this eigenvector belongs to $\im (S+\lambda)^{m-1}$. It follows that the algebraic multiplicity of $-\lambda$ is at least as large as that of $\lambda$. By symmetry the two multiplicities are equal.
\par
If $\tau\neq 0$, then using the fact that $S^2$ commutes with $Y$, we have for any $k\leq m$
\begin{multline*}(S+\lambda)^k Yv=\sum \begin{pmatrix} k\\ i\end{pmatrix} S^i\lambda^{k-i}Yv= \sum \begin{pmatrix} k\\ i\end{pmatrix} Y(-S)^i\lambda^{k-i}v +\\+\tau \sum_{i\equiv 1\mod 2} \begin{pmatrix} k\\ i\end{pmatrix}\lambda^{k-i}v= Y(-S+\lambda)^kv+\tau \frac{(1+\lambda)^k-(1-\lambda)^k}{2}v
\end{multline*}
Setting $k=m$, we obtain, in particular,  $v\in \im (S+\lambda)^m$. The $-\lambda$-eigenvector $Y(S-\lambda)^{m-1}v-\frac{\tau}{2\lambda}(S-\lambda)^{m-1}v$ can be then written, thanks to the above formula, as
$$(-S-\lambda)^{m-1}Yv +(-1)^m\tau \frac{(1+\lambda)^{m-1}-(1-\lambda)^{m-1}}{2} v -\frac{\tau}{2\lambda} (S-\lambda)^{m-1}v,$$
and so it belongs $\im (S+\lambda)^{m-1}$. Thus the algebraic multiplicities are equal also in this case.
\end{proof}

We now consider the remaining case $Yu=\frac{\tau}{2\lambda}u$.
\begin{lemma} If $\lambda\neq 0$ and $u$ is a $\lambda$-eigenvector of $S$ and $Yu=\frac{\tau}{2\lambda}u$, then $u$ is a common eigenvector of $A$ and $B$, and consequently $\lambda=\pm\mu_1\pm\mu_2$.\label{common}
\end{lemma}
\begin{proof} From assumption, $u$ is a common eigenvector of $S=A+B$ and $Y=(A-B)/2$, i.e. $u$ is a common eigenvector of $A$ and $B$, with eigenvalues $\frac{1}{2}\left(\lambda+\frac{\tau}{\lambda}\right)$ and $\frac{1}{2}\left(\lambda-\frac{\tau}{\lambda}\right)$, respectively. These must be $\pm \mu_1$ and $\pm\mu_2$, so that such a $\lambda$ must be equal to one of the four values $\pm \mu_1\pm\mu_2$. \end{proof}
Finally, for $\lambda=0$ we have
\begin{lemma} Suppose that $0$ is an eigenvalue of $S$. If $\tau\neq 0$ then its algebraic multiplicity is even. If $\tau=0$, then $A$ and $B$ have a common eigenvector (with eigenvalues summing up to zero). \label{zero}
\end{lemma}
\begin{proof}
Suppose that $\tau\neq 0$ and a nonzero $v$ satisfies $S^mv=0$ for an odd $m$. Then $S^m Yv=-YS^m v + \tau v=\tau v$, and so $Yv\neq 0$ and $S^{m+1} Yv=0$. Thus the algebraic multiplicity of $\lambda=0$ is even. If $\tau=0$ and $Su=0$, then $SYu=0$, and so $Yu=\rho u$ for some $\rho$, since the geometric multiplicity of each eigenvalue is $1$.
\end{proof}

Taken together, these three lemmata imply that the characteristic polynomial of $S$ has the form
\begin{equation*}  \left(z-\delta_1(\mu_1+\mu_2)\right)^{m_1} \left(z-\delta_2(\mu_1-\mu_2)\right)^{m_2}\prod_{i=1}^r(z^2-x_i),
\end{equation*}
where $\delta_j=\pm 1$. The trace of $S$ is then 
$\delta_1m_1(\mu_1+\mu_2)+ \delta_2m_2(\mu_1-\mu_2)$. On the other hand 
$$\tr S=d_1\mu_1+d_2\mu_2=\frac{d_1+d_2}{2}(\mu_1+\mu_2) + \frac{d_1-d_2}{2}(\mu_1-\mu_2).$$
According to Proposition \ref{empty}, we have only three possibilities (under the assumption $d_1\geq d_2$): $d_1=d_2=0$, $d_1=d_2=1$, or $d_1=2,d_2=0$. 
Comparing the two formulae for the trace proves  the proposition.
\end{proof}

\subsection{The varieties $D_{k,m}(\mu_1,\mu_2)$\label{nonvan}}

We return to the case when \eqref{var} is nonempty, i.e. to the varieties $D_{k,m}(\mu_1,\mu_2)$, $k=0,1,2$. Their elements can be alternatively described as pairs of matrices $(S,Y)$, where $S$ is of the form \eqref{S}, satisfying the equations 
\begin{equation} SY+YS=\mu_1^2-\mu_2^2,\quad (Y-S/2)^2=\mu_2^2,\label{SY}\end{equation}
together with $\tr S=d_1\mu_1+d_2\mu_2$, $\tr Y=\frac{1}{2}(d_1\mu_1-d_2\mu_2)$, where $(d_1,d_2)=(0,0)$ if $k=2$, $(d_1,d_2)=(1,1)$ if $k=1$, and $(d_1,d_2)=(2,0)$ if $k=0$.
\par
 Proposition \ref{alg} implies that the characteristic polynomial $P(z)$ of $S$ is of the form
\begin{equation*} P(z)=(z-\mu_1-\mu_2)^{\epsilon_1}(z-\mu_1+\mu_2)^{\epsilon_2}Q(z),\label{QQ}\end{equation*}
where $(\epsilon_1,\epsilon_2)$ is equal to $(0,0)$ if $k=2$, to $(1,0)$ if $k=1$, and to $(1,1)$ if $k=0$. The polynomial $Q(z)$ has degree $2m$ with coefficients of all odd powers of $z$ equal to zero. 
\par
If $(\epsilon_1,\epsilon_2)\neq (0,0)$, we can simplify the form of $S$ and $Y$:
\begin{proposition} (i) The affine variety $D_{1,m}(\mu_1,\mu_2)$ is canonically isomorphic to the variety of pairs of matrices $(S,Y)$ satisfying equations \eqref{SY} and having the following form:
\begin{equation}S=\begin{pmatrix} S_0 & 0\\ e & \mu_1+\mu_2\end{pmatrix},\quad Y=\begin{pmatrix} Y_0 & 0 \\ v & (\mu_1-\mu_2)/2\end{pmatrix},
\label{D1m}\end{equation}
where $S_0$ is the companion matrix \eqref{S} to $Q(z)$ (in particular all $s_{2i+1}$ are equal to zero) and $e=(0,\dots,0,1)$.\\
(ii) The affine variety $D_{0,m}(\mu_1,\mu_2)$ is canonically isomorphic to the variety of pairs of matrices $(S,Y)$ satisfying equations \eqref{SY} and having the following form:
\begin{equation}S=\begin{pmatrix} S_0 & 0 & 0\\ e & \mu_1+\mu_2 & 0\\ e & 0 & \mu_1-\mu_2\end{pmatrix},\quad Y=\begin{pmatrix} Y_0 & 0 & 0\\ v_1 & (\mu_1-\mu_2)/2 & 0\\ v_2 & 0 & (\mu_1+\mu_2)/2 \end{pmatrix},
\label{D0m}\end{equation}
where $S_0$ and $e$ are as in case (i).
\label{can}\end{proposition}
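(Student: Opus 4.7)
The plan is to construct canonical linear changes of basis sending $(S,Y)$ from its companion-form description of $D_{k,m}(\mu_1,\mu_2)$ into the block form \eqref{D1m} or \eqref{D0m}. Since the equations \eqref{SY} and the trace conditions are conjugation-invariant, they pass through automatically, so one only needs to verify the specific block shapes. I sketch case~(i); case~(ii) is the same argument applied to two eigenvalues.

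First I would use Proposition~\ref{alg} together with the trace condition $\tr S=\mu_1+\mu_2$ to factor the characteristic polynomial of $S$ as $P(z)=(z-\alpha)Q(z)$, where $\alpha=\mu_1+\mu_2$ is a simple root and $Q(z)=\prod_{i=1}^m(z^2-x_i)$ is a monic even polynomial of degree $2m$. Set $w:=Q(S)e_1$; then $(S-\alpha)w=P(S)e_1=0$, so $w$ is the canonical $\alpha$-eigenvector. Writing $Q(z)=\sum_{j=0}^{2m}c_jz^j$ with $c_{2i+1}=0$ and $c_{2m}=1$, in the companion basis $w$ has coordinates $(c_0,0,c_2,0,\dots,c_{2m-2},0,1)^\top$, so its $e_{2m+1}$-entry equals $1$ and $(e_1,\ldots,e_{2m},w)$ is a basis of $\mathbb{C}^{2m+1}$. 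Let $U$ be the corresponding change-of-basis matrix.

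Next I would verify the block form in this basis. For $S$: the first $2m-1$ columns of $U^{-1}SU$ are the shift columns $e_{j+1}$ since $Se_j=e_{j+1}$ for $j<2m$ and these remain new basis vectors; the last column is $\alpha e_{2m+1}$ by the eigenvector property of $w$; the critical $2m$-th column is $U^{-1}Se_{2m}=U^{-1}e_{2m+1}$, and the relation $e_{2m+1}=w-c_0e_1-c_2e_3-\dots-c_{2m-2}e_{2m-1}$ reproduces exactly the last column of the companion matrix $S_0$ of $Q$ together with a $1$ in row $2m+1$, matching \eqref{D1m}. For $Y$: the identity $SY+YS=\mu_1^2-\mu_2^2$ yields $[S^2,Y]=0$, and since $Q(z)=\widetilde Q(z^2)$, this forces $[Q(S),Y]=0$, so $Yw=Q(S)(Ye_1)$ lies in $\operatorname{im}Q(S)\subset\ker(S-\alpha)=\mathbb{C}w$, i.e.\ $Yw=\lambda w$. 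Lemma~\ref{common} pins the scalar down as $\lambda=(\mu_1-\mu_2)/2$, producing the zero upper-right block and the correct lower-right entry of $Y$ in \eqref{D1m}. The inverse morphism is built symmetrically: in a block-form $(S,Y)$, $e_1$ is cyclic for $S$ because $Q(S)e_1=e_{2m+1}$, so conjugating by $(e_1,Se_1,\ldots,S^{2m}e_1)$ returns a companion form, and the two constructions compose to the identity.

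The main obstacle I anticipate is the verification that the $(2m+1,2m)$-entry of $U^{-1}SU$ is exactly $1$ rather than an arbitrary nonzero scalar; this is why one must take $w=Q(S)e_1$ with the specific normalization given by the monic leading coefficient of $Q$, and why the odd-indexed entries of $w$ vanish (matching the vanishing of the odd coefficients of $S_0$) precisely because $Q$ is even. Case~(ii) follows the same scheme with $P(z)=(z-\alpha_+)(z-\alpha_-)Q(z)$ and $\alpha_\pm=\mu_1\pm\mu_2$, using the two canonical eigenvectors $w_\pm:=(S-\alpha_\mp)Q(S)e_1$ rescaled by $\pm 1/(2\mu_2)$ so that both lower-left $1$'s in \eqref{D0m} appear in the basis expansion of $e_{2m+1}$; applying Lemma~\ref{common} separately on each one-dimensional eigenspace gives $Yw_\pm=\tfrac12(\mu_1\mp\mu_2)w_\pm$, pinning down the two distinct diagonal entries of $Y$ in \eqref{D0m}.
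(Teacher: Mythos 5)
Your proposal is correct and follows essentially the same route as the paper: your basis $(e_1,\dots,e_{2m},Q(S)e_1)$ (resp.\ the rescaled vectors $w_\pm$) is exactly the paper's change of basis of $\cx[z]/(P(z))$ from $1,z,\dots,z^{2m}$ to $1,\dots,z^{2m-1},Q(z)$ (resp.\ $1,\dots,z^{2m-1},\pm(2\mu_2)^{-1}(z-\mu_1\pm\mu_2)Q(z)$), under which $S$, as multiplication by $z$, visibly takes the stated companion-block form, and the form of $Y$ is likewise extracted from Lemmata \ref{common}, \ref{zero} and equations \eqref{SY}. The only cosmetic difference is that you justify $Y$-invariance of the eigenline via $[S^2,Y]=0\Rightarrow[Q(S),Y]=0$ and then read off the eigenvalue (strictly, the value $\lambda=\tau/2\alpha$ comes from plugging the eigenvector into $SY+YS=\tau$ rather than from the statement of Lemma \ref{common}, whose hypothesis is that identity), which is a harmless reorganization of the paper's citation of its lemmata.
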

\begin{proof} The affine isomorphism is given by the change of basis of $\cx[z]/(P(z))$ from $1,z,\dots, z^{2m}$ to
$ 1,\dots,z^{2m-1}, Q(z)$  in case (i), and to $1,\dots, z^{2m-1}, (2\mu_2)^{-1}(z-\mu_1+\mu_2)Q(z), -(2\mu_2)^{-1}(z-\mu_1-\mu_2)Q(z)$ in case (ii).  Since $S$ corresponds to multiplication by $z$ on $\cx[z]/(P(z))$, it has in both cases the stated form in the new basis. Furthermore, Lemmata \ref{common}, \ref{zero}, and equations \eqref{SY} imply that $Y$ also has the stated form. 
\end{proof}

Observe now that the blocks $S_0$ and $Y_0$ in the above proposition still satisfy equations \eqref{SY} and $\tr S_0=\tr Y_0=0$.  These blocks are therefore elements of the variety $D_{2,m}(\mu_1,\mu_2)$. Thus $(S,Y)\mapsto (S_0, Y_0)$ defines  canonical holomorphic maps from $D_{1,m}(\mu_1,\mu_2)$  or $D_{0,m}(\mu_1,\mu_2)$ to $D_{2,m}(\mu_1,\mu_2)$. Let us write $\phi$ for this holomorphic map from $D_{1,m}(\mu_1,\mu_2)$ to $D_{0,m}(\mu_1,\mu_2)$. In the case of $D_{0,m}$ we can associate to $(S,Y)$ the minor matrices
\begin{equation} S_1=\begin{pmatrix} S_0 & 0\\ e & \mu_1+\mu_2\end{pmatrix},\quad Y_1=\begin{pmatrix} Y_0 & 0\\ v_1 & (\mu_1-\mu_2)/2 \end{pmatrix},
\label{D01}\end{equation}
or 
\begin{equation} S_2=\begin{pmatrix} S_0 & 0\\ e & \mu_1-\mu_2\end{pmatrix},\quad Y_2=\begin{pmatrix} Y_0 & 0\\ v_1 & (\mu_1+\mu_2)/2 \end{pmatrix},
\label{D02}\end{equation}
and so obtain canonical holomorphic maps
\begin{equation} \phi_1:D_{0,m}(\mu_1,\mu_2)\to D_{1,m}(\mu_1,\mu_2), \quad \phi_2:D_{0,m}(\mu_1,\mu_2)\to D_{1,m}(\mu_1,-\mu_2).
\label{phi12}\end{equation}
We have a commutative diagram:

\medskip 

\begin{diagram}& & D_{2,m}(\mu_1,\mu_2)=D_{2,m}(\mu_1,-\mu_2) & &\\&
\ruTo(2,2)^\phi &  &\luTo(2,2)^\phi &\\
D_{1,m}(\mu_1,\mu_2) & & & & D_{1,m}(\mu_1,-\mu_2)\\
& \luTo(2,2)^{\phi_1} & &\ruTo(2,2)^{\phi_2}& \\ &&
D_{0,m}(\mu_1,\mu_2)=D_{0,m}(\mu_1,-\mu_2) & &
\end{diagram}

\medskip

\begin{remark} Observe that a pair $(S,Y)$ of the form \eqref{D0m} satisfies equations \eqref{SY} if and only if both pairs $(S_1,Y_1)$ and $(S_2,Y_2)$ satisfy these equations. Thus $D_{0,m}(\mu_1,\mu_2)$ is the fibred product $D_{1,m}(\mu_1,\mu_2)\times_{D_{2,m}(\mu_1,\mu_2)}D_{1,m}(\mu_1,-\mu_2)$.
\label{pullback}\end{remark}

\section{The case $m=1$ - complex structures\label{Ex}}

We are going to identify the surfaces $D_{2,1}$, $D_{1,1}$ and $D_{0,1}$, i.e. we consider the variety \eqref{var} for $n=2,3,4$. 

\medskip

{\bf n=2.} (cf. \cite{Bie2}) In this case $\tr A=0$ and $\tr B=0$, so that both $S$ and $Y$ are also traceless.
Write 
$$ S=\begin{pmatrix} 0 & x \\ 1 & 0\end{pmatrix}, \quad Y=\begin{pmatrix} a & b \\ c & -a\end{pmatrix}.$$
The equation $SY+YS=\tau$ reduces to $b+cx=\tau$, and the equation $\mu_2^2=B^2=(Y-S/2)^2$ becomes then
$$ a^2-\bigl(x\bigl(c+\frac{1}{2}\bigr)-\tau\bigr)\bigl(c-\frac{1}{2})=\mu_2^2,$$
or
\begin{equation} a^2-xc^2+\frac{1}{4}x+(\mu_1^2-\mu_2^2)c-\frac{1}{2}(\mu_1^2+\mu_2^2)=0,\label{D2}\end{equation}
which is a deformation of the $D_2$-singularity.

\medskip

{\bf n=3.} We can assume that $A\sim (\mu_1,\mu_1,-\mu_1)$ and $B\sim (\mu_2,\mu_2,-\mu_2)$. Proposition \ref{can} implies that our variety is isomorphic to the variety of pairs $(S,Y)$ satisfying \eqref{SY} and of the form  
\begin{equation} S=\begin{pmatrix} 0 & x & 0\\ 1 & 0 & 0\\ 0 & 1 & \mu_1+\mu_2\end{pmatrix},\quad Y=\begin{pmatrix} a & b & 0\\ c & -a & 0\\ y & z & (\mu_1-\mu_2)/2\end{pmatrix}.\label{SY3}\end{equation}
The equation $SY+YS=\tau$ gives again $b+cx=\tau$ and two more equations:
\begin{equation} c+(\mu_1+\mu_2)y +z=0,\label{c}\end{equation}
\begin{equation}-a+ (\mu_1+\mu_2)z+yx+ (\mu_1-\mu_2)/2=0.\label{a}\end{equation}
These allow us to express $a,b$ and $c$ as functions of $y$ and $z$ and substituting into $\mu_2^2=B^2=(Y-S/2)^2$ gives the coordinate ring of the variety \eqref{var},  which is easily seen to be determined by a single equation
\begin{equation} y^2x-z^2+\frac{1}{4} +(\mu_1-\mu_2)y=0.\label{D1}\end{equation}
This is the Dancer deformation of the $D_1$-manifold \cite{Dan}.

\medskip

{\bf n=4.} This time  $A\sim (\mu_1,\mu_1,\mu_1,-\mu_1)$ and $B\sim (\mu_2,\mu_2,-\mu_2,-\mu_2)$. Proposition \ref{can} implies that our variety is isomorphic to the variety of pairs $(S,Y)$ satisfying \eqref{SY} and of the form  
$$S=\begin{pmatrix} 0 & x & 0 & 0\\ 1 & 0 & 0 & 0\\ 0 & 1 & \mu_1+\mu_2 & 0\\ 0 & 1 & 0 & \mu_1-\mu_2\end{pmatrix}, $$
$$Y=\begin{pmatrix} a & b & 0 & 0\\ c & -a & 0 & 0\\ y & z & (\mu_1-\mu_2)/2 & 0\\
u & v & 0 & (\mu_1+\mu_2)/2\end{pmatrix}.$$
As discussed in the previous section, $(S,Y)$ satisfies \eqref{SY} if and only if the $3\times 3$ minor matrices obtained by removing the 3rd (resp. the 4th) row and the 3rd (resp. the 4th) column satisfy these equations, i.e. belong to $D_{1,1}(\mu_1,-\mu_2)$ (resp.  $D_{1,1}(\mu_1,+\mu_2)$). The equation $SY+YS=\tau$ gives equations \eqref{c}-\eqref{a} and the following two equations:
\begin{equation}c+(\mu_1-\mu_2)u+v=0,
\label{c2}\end{equation}
\begin{equation} -a +(\mu_1-\mu_2)v +ux +(\mu_1+\mu_2)/2=0.\label{a2}\end{equation}
The equation $(Y-\frac{1}{2}S)^2=\mu_2^2$ for the two minor matrices is equivalent to equation \eqref{D1}, i.e.:
\begin{equation}0=y^2x-z^2+\frac{1}{4} +(\mu_1-\mu_2)y= \left(x-(\mu_1-\mu_2)^2\right)y^2 + \left((\mu_1-\mu_2) y +\frac{1}{2}\right)^2-z^2,                               \label{yz}
\end{equation}
\begin{equation}0=u^2x-v^2+\frac{1}{4} +(\mu_1+\mu_2)u= \left(x-(\mu_1+\mu_2)^2\right)u^2 + \left((\mu_1+\mu_2) u +\frac{1}{2}\right)^2-v^2.                               \label{uv}
\end{equation}
We can combine \eqref{c2}-\eqref{a2} with \eqref{c}-\eqref{a} to obtain two equations for $u,v,y,z$,  which we row reduce to 
\begin{equation} z-v+(\mu_1+\mu_2)y -(\mu_1-\mu_2)u=0,\label{1}\end{equation}
\begin{equation} 2\mu_2v+(x-(\mu_1+\mu_2)^2)y+(-x+\mu_1^2-\mu_2^2)u=\mu_2.\label{2}\end{equation}
Thus both $z$ and $v$ are functions of $y$ and $u$, and the solutions are:
\begin{equation*} z=
\frac{1}{2\mu_2}\left((\mu_1-\mu_2)^2-x\right)(y-u)+(\mu_1-\mu_2)y+\frac{1}{2},
\end{equation*}
\begin{equation*}
v=\frac{1}{2\mu_2}\left((\mu_1+\mu_2)^2-x\right)(y-u)+(\mu_1+\mu_2)u+\frac{1}{2},
\end{equation*}
Write $\alpha_{\pm}=\mu_1\pm \mu_2$ and introduce a new variable $w=(y-u)/2\mu_2$. The last two equations become then $z=(\alpha_-^2-x)w+\alpha_- y+\frac{1}{2}$ and $v=(\alpha_+^2-x)w+\alpha_+ u+\frac{1}{2}$.
Substituting into \eqref{yz}-\eqref{uv} we obtain 
$$ \bigl(x-\alpha_-^2\bigr)\bigl((y+\alpha_- w)^2+w-xw^2)\bigr)=0,$$
$$ \bigl(x-\alpha_+^2\bigr)\bigl((u+\alpha_+ w)^2+w-xw^2)\bigr)=0.$$
We easily check that
\begin{equation} y+\alpha_-w=u+\alpha_+ w=\frac{(\mu_1+\mu_2)y-(\mu_1-\mu_2)u}{2\mu_2}\label{t}\end{equation}
and, therefore, the defining ideal of $D_{0,1}(\mu_1,\mu_2)$ is generated by the single polynomial $(y+\alpha_- w)^2+w-xw^2$.
 Making the final substitution $t=y+\alpha_- w=u+\alpha_+ w$ we obtain the equation of the Atiyah-Hitchin manifold (also known as the $D_0$-manifold):
\begin{equation} t^2-xw^2+w=0.\label{D0}\end{equation}


\begin{remark} If $n=2$ or $n=3$, then we can extend the above identifications of complex structures to the case when one or both of $\mu_1,\mu_2$ is equal to zero. Indeed,  this means  replacing a semisimple orbit with a minimal nilpotent orbit. Since the elements of the minimal nilpotent orbit still satisfy the quadratic equation $A^2=0$, all the considerations from this and from the previous section remain valid. Observe that for $n=3$ we get a smooth manifold for any value of $\mu_1,\mu_2$, since, if $A+B$ is a regular matrix, then neither $A$ nor $B$ can be the singular point $0$ of the nilpotent variety.
\par
If $n=4$, the above identification also extends to the case $\mu_1=0$ or $\mu_2=0$, i.e. when one or both orbits become nilpotent. In this case, however, the canonical form \eqref{D0m} is no longer valid: the lower-right $2\times 2$-blocks of $S$ and $Y$ become nilpotent matrices. One needs to repeat the above computation separately for $S,Y$ of this form.
\end{remark}

\section{The case $m=1$ - the metrics\label{metric}}

As discussed in section \ref{slices} the manifolds $D_{k,m}(\mu_1,\mu_2)$ carry a natural hyperk\"ahler structure arising from their construction as a moduli space of solutions to Nahm's equations. In the case of $k=1$ there exist of course well-known hyperk\"ahler metrics on the complex surfaces found in the previous section: the ALF gravitational instantons \cite{Hi,AH}. Since our description of these spaces is very different from previously known ones, we are going to show that the metrics on $D_{k,1}$, $k=2,1,0$,  are the standard ones. 
\par
The hyperk\"ahler metric on $D_{k,m}(\mu_1,\mu_2)$ is obtained as a hyperk\"ahler quotient of the manifold $N_{n}$ ($n=2m+2-k$), described in \S\ref{slices}, and a pair of semisimple adjoint orbits with their Kronheimer-Biquard-Kovalev metrics. Both adjoint orbits admit a family of hyperk\"ahler structures parameterised by real numbers:  for any  $r_1\in \oR$ there is a hyperk\"ahler structures on $\sO_1$ such that the generic complex structure $I_\zeta$ ($\zeta\in \oP^1$) is that the adjoint orbit with eigenvalues $\pm (\mu_1+2r_1\zeta-\ol{\mu}_1\zeta^2)$ with multiplicities $k_1$ and $l_1$, and analogously for $\sO_2$. 
\par
The corresponding hyperk\"ahler metric on $D_{k,m}(\mu_1,\mu_2)$ can be found by identifying real sections of the twistor space which is obtained as the fibrewise complex-symplectic quotient of the product of fibred product of the twistor space of $N_n$ and the twistor spaces of the orbits $\sO_1$, $\sO_2$. The twistor space of $N_n$ was described at the end of section \ref{slices}. The sections of the twistor space of the orbits are simply $n\times n$ matrices with $\sO(2)$-entries (belonging to the orbit of $(\mu_i+2r_i\zeta-\ol{\mu}_i\zeta^2)I_{k_i}\oplus -(\mu_1+2r_i\zeta-\ol{\mu}_i\zeta^2)I_{l_i}$ for each $\zeta$, $i=1,2$). 
If $X_1\in \sO_1$, $X_2\in \sO_2$ and $(S,g)\in N_n$, then the complex moment map equation is $X_1+X_2=gSg^{-1}$. In terms of our original equation  $A+B=S$, $A+B=g^{-1}(X_1+X_2)g$, i.e. $Y=\frac{1}{2}g^{-1}(X_1-X_2)g$. It follows that
\begin{equation}\tilde S= D(\zeta) (S/\zeta^2)D(\zeta)^{-1},\quad \tilde Y=D(\zeta)\exp(S/\zeta)(Y/\zeta^2)\exp(-S/\zeta)D(\zeta)^{-1}.\label{transition}\end{equation}

We shall now identify the twistor lines for our surfaces $D_{k,1}(\mu_1,\mu_2)$, $k=2,1,0$. We begin with the special case $D_{2,1}(\mu/2,\mu/2)=D_{2,1}(\mu/2,-\mu/2)$. 
\par
{\boldmath${D_{2,1}(\mu/2,\mu/2)}$.}
As long as $x\neq 0$, we can diagonalise $S$ to $\diag(\lambda,-\lambda)$. The corresponding $Y_d$ (which anticommutes with $S$) is of the form $\begin{pmatrix} 0 & u\\ v & 0\end{pmatrix}$. The relationship between $u,v$ and $a,c$ in \S\ref{Ex} is
$$ a=\frac{u+v}{2}, \enskip c=\frac{u-v}{2\lambda}.$$
If $S$ is diagonal, the transition functions in \eqref{twistor} become $\tilde S=S/\zeta^2$, $\tilde g=g\exp(-S/\zeta)$, and the ones in \eqref{transition} are $\tilde S=S/\zeta^2$, $\tilde Y=\exp(S/\zeta)(Y/\zeta^2)\exp(-S/\zeta)$. Therefore the transition functions for $\lambda,u,v$  are $\tilde{\lambda}=\frac{\lambda}{\zeta^2}$, $\tilde{u}=e^{2\lambda/\zeta}\frac{u}{\zeta^2}$, $\tilde{v}=e^{-2\lambda/\zeta}\frac{v}{\zeta^2}$, and it follows that in terms of $\tilde{a},\tilde{c}$ we have:
$$ \tilde{a}=\frac{\tilde u+\tilde v}{2}=\frac{1}{\zeta^2}\left(a\cosh\frac{2\lambda}{\zeta}+c \lambda \sinh \frac{2\lambda}{\zeta}\right),$$
$$ \tilde{c}=\frac{\tilde u-\tilde v}{2\tilde \lambda}=\frac{a}{\lambda}\sinh\frac{2\lambda}{\zeta}+c \cosh \frac{2\lambda}{\zeta}.$$
In particular observe that $a-\lambda c$ is a section of $L^{2}(2)$, where $L^{2}$ is a line bundle over $|\sO(2)|\simeq T\oP^1$ with transition function $\exp(-2\lambda/\zeta)$ from $\zeta\neq \infty$ to $\zeta\neq 0$. The equation \eqref{D2} can be written as $(a+\lambda c)(a-\lambda c)=\frac{1}{4}(\mu^2-x)$, so that twistor sections are given by a section $x(\zeta)$ of $\sO(4)$  and a section $s(\zeta,\lambda)$ of $L^2(2)$  over the elliptic curve $\lambda^2=x(\zeta)$ satisfying $s(\zeta,\lambda)s(\zeta,-\lambda)=\frac{1}{4}(\mu^2(\zeta)-x(\zeta))$ (satisfying the natural reality conditions). With a bit of extra care (cf. \cite[p.308-309]{BieSU}) one can show that $2$ zeros of $s(\zeta,\lambda)$ occur  at the intersection of the the elliptic curve with $\lambda=\mu(\zeta)$ and the two other zeros  at the intersection of the the elliptic curve with $\lambda=-\mu(\zeta)$.

\medskip

{\boldmath${D_{2,1}(\mu_1,\mu_2)}$, $\mu_1\neq\pm \mu_2$.} 
 Following Hitchin \cite{Hi}, let us first rewrite \eqref{D2}. After multiplying  it by $x$ (this will not affect the determination of twistor lines), we can rewrite it as
$$ xa^2-(xc-\tau/2)^2+\frac{1}{4}\bigl(x-(\mu_1-\mu_2)^2\bigr)\bigl(x-(\mu_1+\mu_2)^2\bigr)=0,$$
where $\tau=\mu_1^2-\mu_2^2$.
After introducing a new variable $w=xc-\tau/2$ we obtain the equation $w^2-xa^2=\frac{1}{4}\bigl(x-(\mu_1-\mu_2)^2\bigr)\bigl(x-(\mu_1+\mu_2)^2\bigr)$. 
\par
We now proceed as in the case $\mu_1=\pm\mu_2$. 
This time a point $(S,Y)$ of our variety satisfies $SY+YS=\tau$ with $\tau\neq 0$. If $x\neq 0$, we can write $Y=\frac{1}{2\tau}S^{-1}+Y^\prime$ with $Y^{\prime}$ anticommuting with $S$ as before. Thus after diagonalising $S$, $Y$ becomes 
$$\begin{pmatrix} \frac{1}{2\lambda\tau} & u\\ v & \frac{-1}{2\lambda\tau}\end{pmatrix},$$
and, consequently,
$$ a=\frac{u+v}{2}, \enskip c=\frac{u-v}{2\lambda} +\frac{1}{2\lambda^2\tau},$$
so that 
$$ a=\frac{u+v}{2}, \enskip w=\lambda\frac{u-v}{2}.$$
The same computation as above shows that the transition functions for $a$ and $w$ are 
$$ \tilde{a}=\frac{\tilde u+\tilde v}{2}=\frac{1}{\zeta^2}\left(a\cosh\frac{2\lambda}{\zeta}+w \lambda \sinh \frac{2\lambda}{\zeta}\right),$$
$$ \tilde{w}=\tilde \lambda\frac{\tilde u-\tilde v}{2}={a}{\lambda}\sinh\frac{2\lambda}{\zeta}+w \lambda^2\cosh \frac{2\lambda}{\zeta}.$$
Thus $w-\lambda a$ is a section $s(\zeta,\lambda)$ of $L^{2}(4)$ over the elliptic curve $\lambda^2=x(\zeta)$ satisfying $s(\zeta,\lambda)s(\zeta,-\lambda)=\frac{1}{4}\bigl(x(\zeta)-(\mu_1(\zeta)-\mu_2(\zeta))^2\bigr)\bigl(x(\zeta)-(\mu_1(\zeta)+\mu_2(\zeta))^2\bigr)$ (again satisfying the natural reality conditions). Again, the precise location of the zeros of $s(\zeta,\lambda)$ can be determined.
This is Hitchin's description of the hyperk\"ahler metric on the deformation of the $D_2$-singularity \cite{Hi}.

\medskip

{\boldmath${D_{1,1}(\mu/2,\mu/2)}$.} We proceed similarly. In the basis in which $S=\diag(\lambda,-\lambda,\mu)$, $Y$ is of the form 
$$\begin{pmatrix} 0 & u & 0\\ v & 0 &0 \\ 0 & 0 &0\end{pmatrix}.
$$
The transition matrices between this basis and the one in which $S$ and $Y$ have the form \eqref{SY3} are easily computed using corresponding bases of $\cx[z]/((z^2-\lambda^2)(z-\mu))$. They are:
$$ V=\begin{pmatrix} 1 & \lambda & 0\\ 1 &-\lambda & 0\\ 1 & \mu & \mu^2-\lambda^2\end{pmatrix},\quad V^{-1}=\begin{pmatrix} \frac{1}{2} & \frac{1}{2} & 0\\[0.5em] \frac{1}{2\lambda} & -\frac{1}{2\lambda} & 0\\[0.5em] \frac{1}{2\lambda(\lambda-\mu)} &  \frac{1}{2\lambda(\lambda+\mu)} & \frac{1}{\mu^2-\lambda^2}\end{pmatrix}.$$
We have:
$$\begin{pmatrix} a & b & 0\\ c & -a & 0\\ y & z & 0\end{pmatrix}=V^{-1}\begin{pmatrix} 0 & u & 0\\ v & 0 &0 \\ 0 & 0 &0\end{pmatrix} V,$$
and consequently:
\begin{equation}y=\frac{v}{2\lambda(\lambda+\mu)}+\frac{u}{2\lambda(\lambda-\mu)},\quad z=\frac{v}{2(\lambda+\mu)}-\frac{u}{2(\lambda-\mu)}.\label{yz2}
\end{equation}
The transition functions for $\lambda, u,v$ are the same as in the $D_2$-case and from an analogous calculation we obtain:
$$ \tilde{y}= \zeta^2 \left(y \cosh \frac{2\lambda}{\zeta}-\frac{z}{\lambda}\sinh\frac{2\lambda}{\zeta}\right),\quad
 \tilde{z}= z\cosh \frac{2\lambda}{\zeta}-y{\lambda}\sinh\frac{2\lambda}{\zeta}.$$
 In particular, observe that $z+\lambda y$ is a section of $L^2$. The equation  of our $D_1$-manifold can be written as $(z+\lambda y)(z-\lambda y)=\frac{1}{4}$, so that twistor sections are given by a section $x(\zeta)$ of $\sO(4)$  and a nonvanishing section $s(\zeta,\lambda)$ of $L^2$  over the elliptic curve $\lambda^2=x(\zeta)$ satisfying $s(\zeta,\lambda)s(\zeta,-\lambda)=\frac{1}{4}$. This is the twistor description of the Atiyah-Hitchin metric on the $D_1$-manifold (it is easy to see that the reality conditions are the same; one can also check that the symplectic forms coincide, but already the above information  
determines the hypercomplex structure, hence the Levi-Civita connection, hence the metric up to a constant factor).

\medskip

{\boldmath${D_{1,1}(\mu_1,\mu_2)}$, $\mu_1\neq\mu_2$.} As in the $D_2$-case we multiply \eqref{D1} by $x$ and rewrite it as 
\begin{equation} \bigl(yx+(\mu_1-\mu_2)/2\bigr)^2-xz^2=\frac{1}{4}(\mu_1-\mu_2)^2.\label{D1new}\end{equation}
With a new variable $q=yx+(\mu_1-\mu_2)/2$  this becomes $q^2-xz^2=\frac{1}{4}(\mu_1-\mu_2)^2$. Proceeding as in the $D_2$-case we recover Chalmers' description \cite{Cha} of twistor lines for Dancer's deformation of the Atiyah-Hitchin metric.

\medskip

{\boldmath${D_{0,1}(\mu_1,\mu_2)}$.} We first discuss the twistor description of the Atiyah-Hitchin metric. The Atiyah-Hitchin manifold  arises as the quotient of the $D_1$-manifold $z^2-y^2x=1/4$ by the involution $(z,y)\mapsto (-z,-y)$. Setting $s=z^2$, $t=4yz$, $w=-4y^2$, and substituting $s=(1-wx)/4$ into $t^2=-4ws$, we obtain $t^2-xw^2+w=0$. Multiply this last equation  by $w$ and rewrite it as $(xw-1/2)^2-xt^2=1/4$, or $(xw-1/2 +t\sqrt{x})(xw-1/2 -t\sqrt{x})=1/4$. Now observe that 
$$ xw-1/2 \pm t\sqrt{x}=-2(z\pm y\sqrt{x})^2.$$
The description of the twistor space of $D_{1,1}(\mu,\mu)$ implies that $z\pm y\sqrt{x}$ is a nonvanishing section section of $L^{\pm 2}$, so that $xw-1/2 \pm t\sqrt{x}$ becomes a nonvanishing section of $L^{\pm 4}$. We shall therefore show that the hyperk\"ahler metric on ${D_{0,1}(\mu_1,\mu_2)}$ is the Atiyah-Hitchin metric, if we can show that $t$ and $w$ arising via the calculation in \S\ref{Ex} do make $xw-1/2 \pm t\sqrt{x}$ a holomorphic section of $L^{\pm 4}$ (or any $L^{\pm c}$, $c>0$, which corresponds to rescaling the metric). 
\par
According to \S\ref{Ex}, restricting the $4\times 4$ matrices $S,Y$ to appropriate $3\times 3$ minor matrices produces two $D_1$-manifolds with equations
$$ y^2x-z^2+\frac{1}{4} +(\mu_1-\mu_2)y=0, \quad u^2x-v^2+\frac{1}{4} +(\mu_1+\mu_2)u=0.$$
We can rewrite these as in \eqref{D1new}, i.e.:
$$ q^2-xz^2=\frac{1}{4}(\mu_1-\mu_2)^2, \quad p^2-xv^2=\frac{1}{4}(\mu_1+\mu_2)^2$$
where $q=yx+(\mu_1-\mu_2)/2$ and $p=ux+(\mu_1+\mu_2)/2$. It follows from the twistor description of ${D_{1,1}(\mu_1,\mu_2)}$ that $q\pm z\sqrt{x}$ and $p\pm v\sqrt{x}$ are sections of $L^{\pm 2}(2)$. Using the definition of $t$ given in \eqref{t} and  $w=(y-u)/2\mu_2$, together with \eqref{1}, we obtain 
$$ \frac{q+z\sqrt{x}-p-v\sqrt{x}}{2\mu_2}=\frac{(q-p)+(z-v)\sqrt{x}}{2\mu_2}={xw-1/2-t\sqrt{x}}$$
and
$$ \frac{q-z\sqrt{x}-p+v\sqrt{x}}{2\mu_2}=\frac{(q-p)-(z-v)\sqrt{x}}{2\mu_2}={xw-1/2+t\sqrt{x}}.$$
Thus both $xw-1/2+t\sqrt{x}$ and $xw-1/2-t\sqrt{x}$ are meromorphic sections of $L^2$ and $L^{-2}$, respectively, with poles possible only at zeros of $\mu_2(\zeta)$. Equation \eqref{D0} and the calculation at the beginning of this subsection imply, however, that their product is constant (equal to $1/4$). Therefore $xw-1/2\pm t\sqrt{x}$ are holomorphic sections of $L^{\pm 2}$ and the metric on ${D_{0,1}(\mu_1,\mu_2)}$  is the Atiyah-Hitchin metric. 

\begin{remark} Due to general facts about isometries between real-analytic Riemannian manifolds, the natural maps $\phi: D_{1,m}(\mu_1,\mu_2)\to D_{2,m}(\mu_1,\mu_2)$ and $\phi_1:D_{0,m}(\mu_1,\mu_2)\to D_{1,m}(\mu_1,\mu_2)$, described in \S\ref{nonvan}, cannot be (even local) isometries. On the other hand, it appears that the first of these maps is well-defined on each fibre of the twistor space and maps real sections to real sections. To see that this is not so, observe that computing $v$ from \eqref{yz2} yields $v=(\lambda+\mu)(z+\lambda y)$ (and similarly $u=(\mu-\lambda)(z-\lambda y)$), so that the map $\phi$ sends a real section of the twistor space of $D_{1,1}(\mu/2,\mu/2)$ to a pair of sections $s_{\pm}$ of $L^{\pm 2}(2)$ over the  elliptic curve $\lambda^2=x(\zeta)$ such that all zeros of $s_+$ (resp. $s_-$) occur at the intersection points with $\lambda=-\mu(\zeta)$ (resp. $\lambda=\mu(\zeta)$). This  real section does {\em not} arise from the hyperk\"ahler structure of $D_{2,1}(\mu/2,\mu/2)$ (see 
the remark at the end of the calculation of the metric for $D_{2,1}(\mu/2,\mu/2)$)).
\par
The second of these maps, $\phi_1:D_{0,m}(\mu_1,\mu_2)\to D_{1,m}(\mu_1,\mu_2)$, is well-defined only if $\mu_2\neq 0$, which means that is not well-defined on all fibres of the twistor space.
\end{remark}

\section{Hilbert schemes of points transverse to a projection\label{Hilbpi}}

Let $X$ be a complex manifold, $C$ a $1$-dimensional complex manifold, and $\pi:X\to C$ a surjective holomorphic map. Following Atiyah and Hitchin \cite{AH} we define an open subset $X_\pi^{[n]}$ of the Hilbert scheme $X^{[n]}$  of $n$ points in $X$ as consisting of these $0$-dimensional subschemes $Z$ of length $n$ for which $\pi_\ast \sO_Z$ is a cyclic $\sO_C$ sheaf.
Equivalently, $\pi:Z\to \pi(Z)$ is an isomorphism onto its scheme-theoretic image.
\par
Let $Z\in X_\pi^{[n]}$ and suppose that  $\pi:Z\to \pi(Z)$ is an isomorphism, i.e. $\pi_\ast \sO_Z=\sO_{\pi(Z)}$.
 If $t_0$ is a point in $\pi(Z)$ and $t$ is a local coordinate on $C$, then a  neighbourhood of $t_0$ is of the form $\cx[t]/\bigl(t^m\bigr)$ for some $m\leq n$. Since $\pi: Z\to \pi(Z)$ is an isomorphism, there exists a morphism $\phi: \cx[t]/\bigl(t^m\bigr)\to X$, the image of which is the corresponding open subset of $Z$. Such a morphism $\phi$ is an equivalence class of local smooth sections of $\pi$ truncated up to order $m$. Globally, $Z$ is an equivalence class of local smooth sections of $\pi$, defined in a neighbourhood of $\pi(Z)$, where the equivalence relation is as above. In other words, if $s$ is a local section on $U\subset \cx$, then the defining ideal of $Z$ is $I_s+\pi^\ast I_{\pi(Z)}$, where $I_s$ is the defining ideal of $s(U)$. We can formulate this as follows:
\begin{proposition}(cf. \cite[Ch.6]{AH}) $X_\pi^{[n]}$ parameterises $0$-dimensional subschemes $Z$ of $X$ of length $n$ such that $I_Z=I_s+\pi^\ast I_T$, where $T\in C^{[n]}\simeq S^n C$, $s$ is a local section of $\pi$ defined in a neigbourhood $U$ of $T$, and $I_s$ is the defining ideal of $s(U)$.
\end{proposition}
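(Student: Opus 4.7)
The plan is to reduce the statement to a purely local assertion at each point of the support of $Z$ and then, at each such point, to produce the section $s$ explicitly via a coordinate computation made possible by cyclicity. First I would fix a support point $x_0$ of $Z$ with $t_0=\pi(x_0)$, write $m_0$ for the local length of $Z$ at $x_0$, and observe that cyclicity of $\pi_\ast\sO_Z$ is equivalent, at the stalk $x_0$, to the map $\sO_{C,t_0}\to\sO_{Z,x_0}$ being surjective: a generator of a finite-length cyclic module over $\sO_{C,t_0}$ is automatically a unit and can be replaced by $1$. This already yields $\sO_{Z,x_0}\cong\sO_{C,t_0}/\mathfrak m_{t_0}^{m_0}$ and shows that $\pi^\ast t$ is a uniformiser of $\mathfrak m_{Z,x_0}$, which is the observation I would exploit next.

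For the forward direction I would argue as follows. When $m_0\geq 2$, the fact that $\pi^\ast t$ is nilpotent of order exactly $m_0>1$ in $\sO_{Z,x_0}$ forces $\pi^\ast t\notin\mathfrak m_{x_0}^2$, so $d\pi_{x_0}$ is surjective and $\pi$ is a submersion at $x_0$; for $m_0=1$ I would take submersivity at $x_0$ as an additional hypothesis, which holds in all applications considered in the paper. I can then choose local coordinates $(t,y_1,\dots,y_k)$ at $x_0$ with $\pi(t,y)=t$, and write each $y_i$ modulo $I_Z$ as a polynomial $g_i(t)$ of degree $<m_0$, which is possible because $\sO_{X,x_0}/I_Z\cong\cx[t]/(t^{m_0})$. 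The ideal $J=(y_1-g_1(t),\dots,y_k-g_k(t),\,t^{m_0})$ is then contained in $I_Z$ and has the same colength $m_0$, so $J=I_Z$; moreover $s(t)=(t,g_1(t),\dots,g_k(t))$ is a local section of $\pi$ through $x_0$ with $I_s=(y_1-g_1(t),\dots,y_k-g_k(t))$, giving $I_Z=I_s+\pi^\ast I_T$ in a neighbourhood of $x_0$. Since $\pi|_Z\colon Z\to T$ is bijective on supports, the local sections obtained at different support points live in disjoint neighbourhoods of distinct points of $T$, and patch trivially into a single section $s$ defined on a neighbourhood $U$ of $T$.

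The converse direction is routine: if $Z$ is cut out by $I_s+\pi^\ast I_T$ for some local section $s$ of $\pi$ defined near $T$, then $\pi$ carries $Z$ isomorphically onto $T$, so $\pi_\ast\sO_Z\cong\sO_T$ is cyclic, hence $Z\in X_\pi^{[n]}$. The main obstacle in the whole argument is the existence of an honest local section of $\pi$ through each support point $x_0$: cyclicity with $m_0\geq 2$ delivers submersivity at $x_0$ for free, as sketched above, but isolated reduced points of $Z$ sitting at critical values of $\pi$ would fall outside the scope of this description---a phenomenon to be examined in \S\ref{C2pi} through the example $\pi(x,y)=xy$.
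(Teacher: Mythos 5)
Your argument is essentially the paper's own, made rigorous: the paper's (very brief) proof likewise works locally at each point of $\pi(Z)$, uses $\pi_\ast\sO_Z\cong\sO_{\pi(Z)}$ to produce a morphism $\operatorname{Spec}\cx[t]/(t^{m_0})\to X$ over $C$, and then simply asserts that such a morphism is a local section of $\pi$ truncated to order $m_0$. What you supply is precisely the step the paper leaves implicit: cyclicity identifies $\sO_{Z,x_0}$ with $\cx[t]/(t^{m_0})$ with $t\mapsto\pi^\ast t$, and for $m_0\geq 2$ your nilpotency argument is correct (if $\pi^\ast t\in\mathfrak m_{x_0}^2$ then $(\pi^\ast t)^{m_0-1}$ would lie in the $2(m_0-1)$-th power of the maximal ideal of the length-$m_0$ Artin local ring $\sO_{Z,x_0}$, which vanishes), so $\pi$ is submersive at $x_0$, and the adapted coordinates plus the colength comparison give $I_Z=I_s+\pi^\ast I_T$ locally; the converse and the patching over disjoint discs are as in the paper.

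The one point to sort out is your treatment of $m_0=1$. The caveat you raise is genuine, but your parenthetical claim that submersivity "holds in all applications considered in the paper" is not accurate: \S\ref{C2pi} takes $X=\cx^2$ with $\pi(x,y)=xy$, which is critical at the origin, and subschemes containing the reduced origin do belong to $\bigl(\cx^2\bigr)^{[n]}_\pi$ (already $n=1$); for generic $\mu_i$ the $x$-projections of the $D_1$ and $D_2$ surfaces also have critical points. Through such a point no honest local section of $\pi$ exists, so the discrepancy you identify is really an imprecision in the statement (and in the paper's phrase "local smooth sections truncated up to order $m$") rather than a defect of your argument: the intended reading of $s$ is a truncated section, i.e.\ the jet data $w_1(z),\dots,w_{k-1}(z)$ modulo $q(z)$ of \eqref{aff-Hilb}, which for $m_0=1$ is just a choice of point in the fibre. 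It is in that truncated form that the proposition is actually used later (for the $D_k$ surfaces and in \S\ref{C2pi}), so nothing downstream is affected; with that reading your proof is complete, while as a proof of the literal statement it — like the paper's — silently excludes reduced points sitting over critical values of $\pi$.
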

\begin{remark} It follows from the construction that $\pi$ induces a surjective holomorphic map $\pi^{[n]}:X_\pi^{[n]}\to C^{[n]}\simeq S^n(C)$. 
\end{remark}

We shall call $X_\pi^{[n]}$ {\em Hilbert scheme of $n$ points transverse to $\pi$} or simply {\em transverse Hilbert scheme}.

\bigskip

Suppose now that $X\subset \cx^{k}$ is an affine variety  and $\pi:X\to \cx$ is a restriction of a polynomial $P:\cx^k\to \cx$ to $X$. Without loss of generality we may assume that $P(w_1,\dots,w_k)=w_k$. Indeed, if this is not the case, then we can view $X$ as an affine variety in $\cx^{k+1}$ by adding the equation $w_{k+1}=P(w_1,\dots,w_k)$, so that $\pi$ becomes the projection onto the last coordinate. 
\par
Let us therefore assume that $\pi(w_1,\dots,w_k)=w_k$ and write $z=w_k$.
 The ideal of a $T\in \cx^{[n]}\simeq S^n\cx$ is generated by a monic polynomial $q(z)\in \cx[z]$ of degree $n$. 
If $Z\in X_\pi^{[n]}$ and $\pi(Z)=T$, then $q(z)\in I_Z$. If $q(z)=\prod_{i=1}^s (z-z_i)^{m_i}$, then  a local section of $\pi$ in a neighbourhood of $z_i$ modulo $q(z)$ is of the form $(w_1^i(z),\dots,w_{k-1}^i(z),z)$, where $w_j^i(z)$ is polynomial of degree $\leq m_i-1$. These local polynomials can be combined, using Lagrange interpolation, to give polynomials $w_1(z),\dots,w_{k-1}(z)$ of degree at most $n-1$ satisfying $w_j(z)=w_j^i(z)\mod (z-z_i)^{m_i}$. 
It follows that the defining ideal of a point $Z$ in $X_\pi^{[n]}$ is given by 
$$ \bigl(q(z), f(w_1(z),\dots,w_{k-1}(z),z)\bigr)_{f\in I},$$
where $q(z)$ is a monic polynomial of degree $n$, $I$ is the defining ideal of $X$, and $w_1(z),\dots, w_{k-1}(z)$  are polynomials of degree at most $n-1$. Thus $X_\pi^{[n]}$ is an affine variety in $\cx^{kn}$, with coordinates $p_{ij},q_j$, $1\leq i\leq k-1$,  $0\leq j\leq n-1$ defined by equations
\begin{equation} \enskip f\left(\sum_{j=0}^{n-1}p_{1j}z^j,\dots, \sum_{j=0}^{n-1}p_{n-1,j}z^j,z\right)=0 \mod \left(z^n-\sum_{j=0}^{n-1}q_jz^j\right)\label{aff-Hilb}\end{equation}
for every $f\in I$.

\begin{example}
Let $X=\cx^\ast\times \cx$ and $\pi$ the projection onto the second coordinate. We can view $X$ as the affine variety $\{(x,y,z)\in \cx^3;xy=1\}$ with $\pi(x,y,z)=z$. 
According to the above description $X^{[n]}_\pi$ is the variety of triples of polynomials $x(z),y(z),q(z)$ of degrees $n-1,n-1$ and $n$ and $q$ monic such that $x(z)y(z)=1\mod q(z)$. In other words $x(z)$ (or $y(z)$) does not vanish at any of the roots of $q(z)$ and, consequently, $X^{[n]}_\pi$ is isomorphic to the space of based rational maps of degree $n$ (cf. \cite{AH}).\label{ratio} 
\end{example}

\begin{example} Let $X$ be the double cover of the Atiyah-Hitchin manifold, i.e. an affine surface in $\oR^3$ defined by the equation $x^2-zy^2=1$. Again, $X^{[n]}_\pi$ is the variety of triples of polynomials $x(z),y(z),q(z)$ of degrees $n-1,n-1$ and $n$ and $q$ monic, such that $x(z),y(z),z$ satisfy the defining equation modulo $q(z)$. Alternatively, consider the quadratic extension $z=u^2$, so that the defining equation becomes $(x+uy)(x-uy)=1$. If $x(z)$ and $y(z)$ are polynomials of degree $n-1$, then $x(z)\pm uy(z)=x(u^2)\pm uy(u^2)$ and $q(z)=q(u^2)$. In other words, $q(u^2)$ is a polynomial of degree $2n$ with all coefficients of odd powers equal to $0$ and $p(u)=x(u^2)+  uy(u^2)$ is a polynomial of degree $2n-1$ satisfying $p(u)p(-u)=1 \mod q(u^2)$.   Thus $X^{[n]}_\pi$ is the space of degree $2n$ based rational maps of the form $p(u)/q(u^2)$  with $p$ satisfying the above condition. 
\label{AH-Hilb}\end{example}

\subsection{Hyperk\"ahler metrics\label{fibre}}
As observed by Atiyah and Hitchin \cite{AH}, the definition of $X_\pi^{[n]}$ is well-suited to twistorial constructions of new hyperk\"ahler metrics (or hypercomplex structures) from old ones. Namely, let $Z$ be the twistor space of a hypercomplex or hyperk\"ahler manifold. In particular, $Z$ comes equipped with a holomorphic submersion $p:Z\to \oP^1$ and an antiholomorphic involution (real structure) $\sigma$ covering the antipodal map. Suppose that $Z$ also admits a holomorphic map $\pi$ to the total space of the line bundle $\sO(2r)$ which preserves the fibres over $\oP^1$ and is compatible with the real structures, where the canonical real structure of $\sO(2r)$ is
$$\left(\zeta,\eta\Bigl(\frac{\partial}{\partial \zeta}\Bigr) ^{\otimes r}\right) \mapsto \left(-1/{\bar\zeta},(-1)^r\frac{\bar\eta}{\bar{\zeta}^{2r}}\Bigl(\frac{\partial}{\partial \zeta}\Bigr) ^{\otimes r}\right).$$
Assuming that $\pi$ is surjective, we can apply the transverse Hilbert scheme construction fibrewise and obtain a new twistor space $Z^{[n]}_\pi$, which also fibres over $\oP^1$ and has an induced real structure. Moreover, if $\dim Z=2$ and $Z$ had a fibrewise $\sO(2)$-twisted symplectic form, then so does $Z^{[n]}_\pi$ \cite{Beau}. 

From the description of $X_\pi^{[n]}$, given above, it is clear that a  section $s$ of $p:Z^{[n]}_\pi\to \oP^1$ corresponds to a degree $n$ curve $C$ in $Z$ such that $\pi_{|C}:C\to \pi(C)$ is an isomorphism, or, equivalently, a curve of degree $n$ in the total space of the line bundle $\sO(2r)$ together with a lift to $Z$. Moreover, the results of \cite{Bie2} imply that the normal bundle of $s(\oP^1)$ is the sum of $\sO(1)$-s if and only if the normal bundle $N$ of $C$ in $Z$ satisfies the condition $H^0(C,N(-2))=H^1(C,N(-2))=0$. 

\begin{example} Applying this construction to the twistor space of $X=\cx^\ast\times \cx$ (cf. Example \ref{ratio}) produces the natural complete hyperk\"ahler metric on the moduli space $\sM_n$ of framed euclidean $SU(2)$-monopoles of charge $n$ \cite{AH}.  On the other hand applying the construction to the double cover of the Atiyah-Hitchin manifold as in Example \ref{AH-Hilb} produces a totally geodesic submanifold  of $\sM_{2n}$, consisting of monopoles which are symmetric about the origin (in particular centred)  and have total phase equal to $1$. This follows easily by considering the effect of the involution $x\to -x$ in $\oR^3$  on the twistor space of monopoles (cf. \cite{HMM}).  
\label{tot}
\end{example}

\section{Example: hyperk\"ahler metrics on $\bigl(\cx^2\bigr)^{[n]}_{\pi}$\label{C2pi}}

We consider $X=\cx^2$ and $\pi(x,y)=xy$. As described in the previous section, we view $X$ as the variety
$$ \{(x,y,z)\in \cx^3;\; xy=z\},$$
and $\pi(x,y,z)=z$. The variety $\bigl(\cx^2\bigr)^{[n]}_\pi$ is then a $2n$-dimensional affine variety in $\cx^{3n}$ defined by equations
\begin{equation} \left(\sum_{j=0}^{n-1}x_jz^j\right)\left(\sum_{j=0}^{n-1}y_jz^j\right)=z \mod \left(z^n-\sum _{j=0}^{n-1}q_jz^j\right).\label{C2}\end{equation}
For example, the defining equations of $\bigl(\cx^2\bigr)_\pi^{[2]}$ are
$$ x_0y_0+x_1y_1q_0=0,
$$
$$x_1y_0+x_0y_1+x_1y_1q_1=1.
$$

We recall Nakajima's description of the Hilbert scheme of $n$ points in $\cx^2$ \cite{Nak}: $\bigl(\cx^2\bigr)^{[n]}$ is isomorphic to the the quotient by $GL(n,\cx)$ of triples $(B_1,B_2,i)$ consisting of commuting $n\times n$ matrices $B_1,B_2$ and an $i\in \cx^n$ which satisfy the following stability condition: there is no proper subspace $V$ of $\cx^n$ containing $i$ and invariant under $B_1$ and $B_2$. The defining ideal of the corresponding subscheme $Z$  given by
$$ I_Z=\{f\in\cx[x,y];\; f(B_1,B_2)=0\}.$$
The defining ideal of $\pi(Z)$ consists of functions $g\in \cx[z]$ such that $\pi^\ast g=0$, i.e. $g(B_1B_2)=0$. We have an embedding $\sO_{\pi(Z)}\hookrightarrow \pi_\ast\sO_{Z}$, $g\mapsto g(xy)$, which is an isomorphism exactly then, when $\dim \sO_{\pi(Z)}=n$, which means that $B_1B_2$ is a regular matrix.
\par
We conclude therefore that  $\bigl(\cx^2\bigr)^{[n]}_\pi$ consists of $GL(n,\cx)$-orbits of triples $(B_1,B_2,i)$ as above and such that $B_1B_2$ is a regular matrix. Using again the isomorphism between $Z$ and $\pi(Z)$ we also conclude that $i$ is a cyclic vector for $B_1B_2$.  Every conjugacy class of regular matrices contains a unique companion matrix $S$, i.e. a matrix of the form \eqref{S}.
If $B_1B_2=S$, then we can conjugate $B_1$ and $B_2$ by an element of the centraliser of $S$ in order to make the vector $i$ equal to $e_1=(1,0,\dots,0)^T$. The pair $(S,e_1)$ has trivial stabiliser and we have thus shown:
\begin{proposition} The variety $\bigl(\cx^2\bigr)^{[n]}_\pi$ is isomorphic to the variety of triples $(B_1,B_2,S)$ of $n\times n$ matrices, such that $S$ is of the form \eqref{S}, $B_1$,$B_2$ commute, and $B_1B_2=S$.\hfill $\Box$
\label{prop}\end{proposition}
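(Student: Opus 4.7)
The plan is to combine Nakajima's quotient description of $\bigl(\cx^2\bigr)^{[n]}$ with the transversality characterisation already worked out in the paragraph preceding the statement, and then to strip the $GL(n,\cx)$-action by choosing a canonical representative in each orbit.

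First I would make the transversality condition fully explicit in Nakajima's language. For a triple $(B_1,B_2,i)$, the scheme $Z$ has ideal $\{f\in\cx[x,y]:f(B_1,B_2)=0\}$ while $\pi(Z)$ is cut out by $\{g\in\cx[z]:g(B_1B_2)=0\}$. The natural inclusion $\sO_{\pi(Z)}\hookrightarrow \pi_\ast\sO_Z$ is an isomorphism exactly when $\dim\cx[B_1B_2]=n$, i.e.\ when $B_1B_2$ is a regular matrix; in that case, since $B_1$ and $B_2$ commute with $B_1B_2$, the cyclicity of $i$ for the whole triple is equivalent to the cyclicity of $i$ for $B_1B_2$ alone. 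Thus $\bigl(\cx^2\bigr)^{[n]}_\pi$ is the $GL(n,\cx)$-quotient of the locally closed variety of triples $(B_1,B_2,i)$ with $[B_1,B_2]=0$, $B_1B_2$ regular, and $i$ cyclic for $B_1B_2$.

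Second, I would eliminate the group action by exhibiting a canonical representative in each orbit. Every regular matrix is conjugate to a unique companion matrix of the form \eqref{S}; the centraliser of such an $S$ in $GL(n,\cx)$ is $\cx[S]^\times$, and it acts simply transitively on the cyclic vectors of $S$ because $e_1,Se_1,\dots,S^{n-1}e_1$ form a basis. Consequently, each $GL(n,\cx)$-orbit contains a unique representative with $B_1B_2=S$ in companion form and $i=e_1$. In such a representative the Nakajima stability condition is automatic: any common invariant subspace of $B_1$ and $B_2$ containing $e_1$ is invariant under $S=B_1B_2$ and contains its cyclic vector $e_1$, hence equals $\cx^n$. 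Eliminating $i$ yields the bijection onto the variety of triples $(B_1,B_2,S)$ described in the statement.

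The only technical point that I expect to require care is that this bijection is an isomorphism of schemes, not merely of sets. I would address it by exhibiting explicit inverse morphisms: sending $(B_1,B_2,S)$ to the orbit of $(B_1,B_2,e_1)$ is obviously regular, while conversely, on the open locus where $\bigl(i\mid B_1B_2\,i\mid\dots\mid(B_1B_2)^{n-1}i\bigr)$ is invertible (which is exactly the cyclicity locus), conjugation by this matrix is a regular map sending any representative to its companion-form normal form. Equivalently, the conditions ``$B_1B_2$ in companion form'' and ``$i=e_1$'' cut out a slice transverse to the $GL(n,\cx)$-action that meets every orbit in a single point with trivial stabiliser, so the orbit map restricted to the slice is an isomorphism onto the geometric quotient. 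This is really the only nontrivial step; the rest is a direct unwinding of Nakajima's construction and of the transverse Hilbert scheme condition.
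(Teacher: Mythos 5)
Your proof is correct and follows essentially the same route as the paper: Nakajima's quotient description, the identification of transversality with regularity of $B_1B_2$ together with cyclicity of $i$, and normalisation to the companion form with $i=e_1$ using the triviality of the stabiliser. The extra details you supply (the centraliser $\cx[S]$ argument and the explicit inverse morphism via the cyclicity matrix) are accurate refinements of points the paper leaves implicit.
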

To recover the description of $\bigl(\cx^2\bigr)^{[n]}_\pi$ given in \eqref{C2}, we observe that both $B_1$ and $B_2$ commute with the regular matrix $S$, i.e. we can uniquely write:
$$ B_1=\sum_{i=0}^{n-1} x_iS^i,\quad B_2=\sum_{i=0}^{n-1} y_iS^i,$$
and since $S^k=\sum_{i=0}^{n-1} q_i S^i$, the equation $B_1B_2=S$ is equivalent to \eqref{C2}.

$\cx^2$ carries at least two complete hyperk\"ahler metrics: the flat one and the Taub-NUT. We can therefore apply the fibrewise construction described in section \ref{fibre} to both of them and try to obtain new hyperk\"ahler metrics in higher dimensions.
\par
The twistor space $Z$ of the flat metric on $\oR^4$ is the total space of $\sO(1)\oplus \sO(1)$ over $\oP^1$. The map $\pi(xy)=xy$ induces a projection to $\sO(2)$ given by fibrewise multiplication. As discussed above, a section of $p:Z^{[k]}_\pi\to \oP^1$ would correspond to a degree $n$ curve in $|\sO(2)|$ which can be lifted to $|\sO(1)\oplus \sO(1)|\simeq \oP^3\backslash \oP^1$. A degree $n$ curve in $|\sO(2)|$ has genus $(n-1)^2$ and its lift would be a degree $n$ curve in $\oP^3$ having such a genus. There are, however, no such curves owing to a classification result of Hartshorne \cite{Hart}. Thus we conclude that $p:Z^{[n]}_\pi\to \oP^1$ has  no sections and we do not obtain new hyperk\"ahler metrics.

\subsection{Taub-NUT}

The twistor space of the Taub-NUT metric is described for example in \cite[pp. 393--395]{Besse}. There is actually a  family of such metrics depending on a positive real parameter $c$ with the twistor space defined as
$$ Z_c=\{(x,y)\in L^{c}(1)\oplus L^{-c}(1); xy=z\},
$$
where $L^c$ is a line bundle over $|\sO(2)|\simeq T\oP^1$ with transition function $\exp(-c\eta/\zeta)$ from $\zeta\neq \infty$ to $\zeta\neq 0$, and $z$ is the tautological section of $\rho^\ast \sO(2)$ over $|\sO(2)|$, where $\rho: |\sO(2)|\to \oP^1$ is the projection.
\par
It follows that a section of $p:(Z_c)^{[n]}_\pi\to \oP^1$ corresponds to a degree $n$ curve $C$ in $|\sO(2)|$ which can be lifted to $Z_c$, i.e. to a pair of sections $s_1$ of $L^c(1)_{|C}$ and $s_2$ of $L^{-c}(1)_{|C}$ such that $(s_1)+(s_2)=(z)$. It is a priori unclear that such sections exist for any $n$ and, even if they do, that there is a $\sigma$-invariant family of them defining a complete hyperk\"ahler metric. We shall now show that this is so by giving a construction of a complete hyperk\"ahler metric on  $\bigl(\cx^2\bigr)^{[n]}_\pi$ as a hyperk\"ahler quotient.
\par
We observe namely that 
Proposition \ref{prop} is actually a description of $\bigl(\cx^2\bigr)^{[n]}_\pi$ as a complex-symplectic quotient and it suggests what the hyperk\"ahler quotient should be. We begin with the space $V$ of $n\times n$ quaternionic matrices and two copies of $N_n$ (defined in \S\ref{slices}), but with metric rescaled by $c$ (equivalently: we consider solutions to Nahm's equations on $(0,c]$ rather than on $(0,1]$). $V$ has two tri-hamiltonian $U(n)$ actions and $N_n$ also has a tri-Hamiltonian action of $U(n)$. The hyperk\"ahler quotient $M_n$ of $V\times N_n\times N_n$ by $U(n)\times U(n)$  is the hyperk\"ahler slice to $V$ as described in section \ref{slices}. It is therefore a smooth and complete hyperk\"ahler manifold. It remains to identify its complex structures.  The complex structure of $V$ is that of pairs $B_1,B_2$ of complex matrices  with the two $\g\fL(k,\cx)$-valued moment maps given by $B_1B_2$ and $B_2B_1$. The complex structure of $N_n$ is that of pairs $(S,g)$, $S$ having the form \eqref{S} 
and $g\in GL(n,\cx)$, with the moment map $gSg^{-
1}$. It follows from the results of \cite{Bie1}  (recalled in \S\ref{slices}) that $M_n$ is biholomorphic to the variety of quadruples $(B_1,B_2,S_1,S_2)$, with $S_1,S_2$ of the form \eqref{S} and satisfying $B_1B_2=S_1$, $B_2B_1=S_2$. Since the characteristic polynomials of $B_1B_2$ and $B_2B_1$ are the same, it follows that $S_1=S_2$ and $B_1,B_2$ commute. Thus $M_n$ as a complex manifold is isomorphic to $\bigl(\cx^2\bigr)^{[n]}_\pi$.
\par
We observe that in case $n=1$, $V=\oH$ and $N_1=S^1\times \oR^3$ so that the hyperk\"ahler quotient $M_1$ is the Taub-NUT metric on $\cx^2$. It is straightforward to check, given the description of the twistor space of $N_n$ in \S\ref{slices}, that the twistor space of $M_n$ is isomorphic to $(Z_c)_\pi^{[n]}$.

\begin{remark} Just as the Taub-NUT metric itself, the transverse Hilbert scheme of $n$ points on it also admits a tri-Hamiltonian circle action. In the case $n=2$ we can then perform the hyperk\"ahler quotient by this circle and obtain again a hyperk\"ahler $4$-manifold. It is easy to compute from equations \eqref{pols} that this is again a deformation of the $D_2$-singularity.  
\end{remark}

\section{Hyperk\"ahler metrics on $(D_k)^{[m]}_\pi$. \label{HilbDk}}

Recall the varieties $D_{k,m}(\mu_1,\mu_2)$ defined in section \ref{nonvan}. These are regular slices to sums of two orbits and carry, therefore, natural complete hyperk\"ahler metrics. In the case $m=1$ we have identified them as the $D_0$, $D_1$ and $D_2$ ALF gravitational instantons. We are now going to prove that  $D_{k,m}(\mu_1,\mu_2)$ is the Hilbert scheme $\bigl(D_{k,1}(\mu_1,\mu_2)\bigr)^{[m]}_\pi$ of $m$ points on $D_{k,1}(\mu_1,\mu_2)$ transverse to the projection onto the $x$-coordinate (in equations  
\eqref{D2}, \eqref{D1} and \eqref{D0}). 

\medskip

{\boldmath $D_{2,m}(\mu_1,\mu_2)$}.
Let $(S,Y)$ be an element of $D_{2,m}(\mu_1,\mu_2)$. The characteristic polynomial $P(z)$ of $S$ is, thanks to Proposition \ref{alg}, of the form $\prod_{i=1}^m (z^2-x_i)$. If the $x_i$ are distinct, we can conjugate $S$ to a block-diagonal form, with $2\times 2$ blocks 
\begin{equation} S_i= \begin{pmatrix} 0 & x_i\\ 1 & 0\end{pmatrix}.
\label{S_i}\end{equation}
Viewing $S$ as multiplication by $z$ on $\cx[z]/(P(z))$, this corresponds to a change of basis from $1,\dots, z^{2m-1}$ to 
\begin{equation} f_1(z),zf_1(z),f_2(z),zf_2(z),\dots, f_m(z),zf_m(z),\quad f_i(z)=\frac{\prod_{j\neq i} (z^2-x_j)}{\prod_{j\neq i}(x_i-x_j)}.
\label{f_i}\end{equation}
In this basis $Y$ becomes also block-diagonal with $2\times 2$ blocks $Y_i$. Indeed, the equation $SY+YS=\tau$, implies that any $2\times 2$ minor matrix $Y^{ij}$ of $Y$ of the form
$\begin{pmatrix} y_{2i-1,2j-1} & y_{2i-1,2j}\\ y_{2i,2j-1} & y_{2i,2j}\end{pmatrix}$ satisfies the equation $S_iY^{ij}+Y^{ij}S_j=\delta_{ij}\tau$. Since $x_i\neq x_j$ for $i\neq j$, $Y^{ij}=0$ for $i\neq j$. The diagonal blocks $(S_i,Y_i)$ belong to $D_{2}(\mu_1,\mu_2)$ and, consequently, the open subset $D_{2,m}(\mu_1,\mu_2)^o$ of $D_{2,m}(\mu_1,\mu_2)$, where the $x_i$ are distinct is isomorphic to 
\begin{equation} \left\{ (p_1,\dots,p_m)\in D_2(\mu_1,\mu_2);\; p_i=(a_i,c_i,x_i),\enskip \forall_{i\neq j} x_i\neq x_j\right\}/ \Sigma_m.
\end{equation}
To describe the locus  in $D_{2,m}(\mu_1,\mu_2)$ where some $x_i$ coalesce, we observe first that the change of basis matrix from $f_1,\dots,f_m,zf_1,\dots,zf_m$ to $ 1,\dots, z^{2m-1}$
is 
$$ \begin{pmatrix} V(x_1,\dots,x_m) & 0\\ 0 &  V(x_1,\dots,x_m)\end{pmatrix}
$$
where $V=V(x_1,\dots,x_m)$ is the Vandermonde matrix, i.e. $V_{ij}=x_i^{j-1}$. The inverse of the Vandermonde matrix has entries
$$ \bigl(V^{-1}\bigr)_{ij}=(-1)^{m-i}\frac{e_{m-i}(x_1,\dots,\widehat{x_j},\dots, x_m)}{\prod_{k\neq j}(x_j-x_k)},
$$
where $e_l$ denotes the $l$-th elementary symmetric polynomial (with $e_0=1$). It follows that if $Y(t)$ defines a curve in  $D_{2,m}(\mu_1,\mu_2)^o$ with a limit as $t\to 0$ in $D_{2,m}(\mu_1,\mu_2)$, then the corresponding $Y_i(t)$ in $D_{2,1}(\mu_1,\mu_2)$ satisfy 
\begin{equation} \lim_{t\to 0} \sum_{k=1}^m (V(t)^{-1})_{ik}Y_k(t) V_{kj}(t)\enskip \text{exists for all $i,j=1,\dots,m$}.\label{condition}
\end{equation}

If $u_1,\dots,u_m\in \cx$, then the $j$-th column of $V^{-1}\diag(u_1,\dots,u_m)V$ is given by the coefficients of the polynomial $p_j(x)$ of degree $\leq m-1$ satisfying
$p_j(x_k)=u_k x_k^{j-1}$. Thus the necessary and sufficient condition for the condition \eqref{condition} to be satisfied is that there exists polynomials $a(x)$ and $c(x)$ of degree $\leq m-1$ such that
\begin{equation*} Y_k=\begin{pmatrix} a(x_k) & \tau-x_kc(x_k)\\ c(x_k)& -a(x_k)\end{pmatrix},\enskip k=1,\dots, m, 
\end{equation*}
whenever the $x_k$ are distinct. Recalling the equation of $D_{2,1}(\mu_1,\mu_2)$ from the previous section, we conclude that the polynomials $a(x)$ and $c(x)$ satisfy the equation
\begin{equation} a(x)^2-x c(x) ^2+\frac{1}{4}x+(\mu_1^2-\mu_2^2)c(x)-\frac{1}{2}(\mu_1^2+\mu_2^2)=0\enskip\mod q(x) \label{pols}
\end{equation}
where $q(x)=\prod (x-x_k)$ whenever the $x_k$ are distinct. Since this equation extends to the case of non-distinct $x_k$, we conclude that:
\begin{theorem} The variety $D_{2,m}(\mu_1,\mu_2)$ is given by a monic polynomial $q(x)$ of degree $m$, and two polynomials $a(x),c(x)$ of degrees at most $ m-1$, such that \eqref{pols} is satisfied. 
\end{theorem}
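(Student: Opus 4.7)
The plan is to promote the set-theoretic analysis carried out above into an isomorphism of affine varieties. Let $\sV_m$ denote the affine variety of triples $(q(x),a(x),c(x))$ with $q$ monic of degree $m$, $\deg a,\deg c\le m-1$, subject to the polynomial identity \eqref{pols} modulo $q(x)$. I would construct a morphism $\Phi:\sV_m\to D_{2,m}(\mu_1,\mu_2)$ and verify it is an isomorphism.

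First, for the forward direction, suppose $q$ has distinct roots $x_1,\dots,x_m$. In the basis \eqref{f_i} of $\cx[z]/(q(z^2))$, define $(S,Y)$ as block-diagonal with blocks $S_k$ from \eqref{S_i} and
$$Y_k=\begin{pmatrix} a(x_k) & \tau-x_kc(x_k)\\ c(x_k) & -a(x_k)\end{pmatrix},\quad \tau=\mu_1^2-\mu_2^2,$$
and conjugate via the Vandermonde matrix back into the companion-matrix basis $\{1,z,\dots,z^{2m-1}\}$. Evaluating \eqref{pols} at each $x_k$ forces $(S_k,Y_k)\in D_{2,1}(\mu_1,\mu_2)$, so $(S,Y)\in D_{2,m}(\mu_1,\mu_2)$. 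The Vandermonde inversion formula displayed just before the theorem expresses the companion-basis entries of $S$ and $Y$ as polynomial combinations of the coefficients of $q$, $a$, $c$ (via the standard divided-difference presentation of Lagrange interpolation), so $\Phi$ extends regularly to all of $\sV_m$.

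Second, for the inverse, start with $(S,Y)\in D_{2,m}(\mu_1,\mu_2)$. Proposition \ref{alg} gives $\det(z-S)=\prod_i(z^2-x_i)$; set $q(x)=\prod_i(x-x_i)$, whose coefficients are regular in those of $S$. On the open stratum where the $x_i$ are distinct, the discussion preceding the theorem produces unique polynomials $a(x),c(x)$ of degree at most $m-1$ by Lagrange interpolation of the diagonal block data, and condition \eqref{condition} is precisely the statement that these polynomials persist (with the required degree bound) as roots of $q$ coalesce. This provides a set-theoretic inverse to $\Phi$.

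The main obstacle is upgrading this to an algebraic isomorphism on the non-generic strata. For this I would invoke the following: $D_{2,m}(\mu_1,\mu_2)$ is smooth and of dimension $2m$ by the free hyperk\"ahler quotient construction and \eqref{dim}, while $\sV_m$ is a closed affine subvariety of $\oA^{3m}$ also of dimension $2m$ and irreducible (being birational to an open subset of $(D_{2,1})^m/\Sigma_m$). The morphism $\Phi$ is an isomorphism on the open dense stratum where $q$ has distinct roots, hence birational; being a bijective morphism onto a smooth variety, Zariski's main theorem (or a direct check using the polynomial formulas above) identifies $\Phi$ as an isomorphism of affine varieties, completing the proof.
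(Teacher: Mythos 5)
Your treatment of the generic stratum is the paper's argument almost verbatim: block-diagonalisation of $(S,Y)$ for distinct $x_i$, the vanishing of the off-diagonal blocks of $Y$ from $S_iY^{ij}+Y^{ij}S_j=\delta_{ij}\tau$, the Vandermonde conjugation, and the interpolation formula for the columns of $V^{-1}\diag(u_1,\dots,u_m)V$. The only real divergence is the last step. The paper concludes by observing that these formulas make the correspondence $(S,Y)\leftrightarrow (q,a,c)$ polynomial in both directions, so that \eqref{pols} extends over the coalescence locus; you instead package the correspondence as a morphism $\Phi:\sV_m\to D_{2,m}(\mu_1,\mu_2)$ and appeal to a dimension count plus Zariski's main theorem.

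As written, two hypotheses of that ZMT argument are not established, and this is a genuine gap. First, irreducibility (and dimension $2m$) of $\sV_m$: the justification ``birational to an open subset of $(D_{2,1})^m/\Sigma_m$'' presupposes that the distinct-root locus is dense in $\sV_m$, which is exactly what could fail: \eqref{pols} imposes only $m$ equations in $\oA^{3m}$, so a priori $\sV_m$ could have excess components lying over non-reduced $q$. A non-circular argument exists (for instance, identify $\sV_m$ with the transverse Hilbert scheme of the smooth surface $D_{2,1}(\mu_1,\mu_2)$ via \eqref{aff-Hilb} and use irreducibility and smoothness of Hilbert schemes of points on smooth surfaces), but you do not give one. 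Second, surjectivity of $\Phi$ at points of $D_{2,m}(\mu_1,\mu_2)$ with repeated $x_i$: condition \eqref{condition} concerns limits of curves coming from the stratum $D_{2,m}(\mu_1,\mu_2)^o$, so using it to define a set-theoretic inverse at an arbitrary point with repeated roots tacitly assumes that this stratum is dense in $D_{2,m}(\mu_1,\mu_2)$ --- the very point at issue; and without surjectivity, ZMT only yields that $\Phi$ is an open immersion onto its image, not the theorem. The cleanest repair dispenses with ZMT and runs the paper's mechanism backwards: for the companion matrix $S$ the relation $SY+YS=\tau$ determines $Y$ recursively from its first column via $Ye_{i+1}=\tau e_i-SYe_i$, and by the interpolation formula the entries of $Y$ in the companion basis are precisely the coefficients of $a(x)$, $c(x)$ and polynomials derived from them; hence the assignment $(S,Y)\mapsto(q,a,c)$ is regular on all of $D_{2,m}(\mu_1,\mu_2)$, which is the fact your bijectivity claim actually needs.
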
 
Comparing with \eqref{aff-Hilb} we conclude:
\begin{corollary} The variety $D_{2,m}(\mu_1,\mu_2)$ is isomorphic to the transverse Hilbert scheme $\bigl(D_{2,1}(\mu_1,\mu_2)\bigr)^{[m]}_\pi$ of $m$ points on the deformation of the $D_2$-singularity defined by the equation
$$ a^2-xc^2+\frac{1}{4}x+(\mu_1^2-\mu_2^2)c-\frac{1}{2}(\mu_1^2+\mu_2^2)=0,$$
with $\pi(a,c,x)=x$.
\end{corollary}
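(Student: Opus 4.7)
The plan is to exploit the fact that, by Proposition \ref{alg}, the characteristic polynomial $P(z)$ of $S\in D_{2,m}(\mu_1,\mu_2)$ factors as $\prod_{i=1}^m(z^2-x_i)$, so there is a natural projection $\pi:D_{2,m}(\mu_1,\mu_2)\to \cx^{[m]}\simeq S^m\cx$ sending $(S,Y)$ to the multiset of $x_i$. I would establish the description over the open stratum $U$ where all $x_i$ are distinct, and then prove the extension to the full variety by an explicit limit computation.

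On $U$, I would simultaneously block-diagonalize $(S,Y)$ into $m$ pairs $(S_i,Y_i)\in D_{2,1}(\mu_1,\mu_2)$. For $S$, the basis \eqref{f_i} built from Lagrange interpolation does the job. For $Y$, the anticommutation $SY+YS=\tau$, read block-by-block, gives $S_iY^{ij}+Y^{ij}S_j=\delta_{ij}\tau$; since $S_i$ and $-S_j$ have disjoint spectra for $i\neq j$ (as $x_i\neq x_j$), this forces $Y^{ij}=0$. This identifies $U$ with the open stratum of $\operatorname{Sym}^m D_{2,1}(\mu_1,\mu_2)$ in which the $x$-coordinates are distinct, which coincides with the open stratum of $\bigl(D_{2,1}(\mu_1,\mu_2)\bigr)^{[m]}_\pi$ described by \eqref{pols}.

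The crux of the argument is extending this identification across the collision locus. I would start with a holomorphic curve $(S(t),Y(t))$ in $D_{2,m}(\mu_1,\mu_2)$ meeting $U$ for $t\neq 0$, and conjugate by the Vandermonde change-of-basis matrix of the previous step, as in \eqref{condition}. Using the explicit formula for $V^{-1}$, one verifies that for complex scalars $u_1,\dots,u_m$, the $j$-th column of $V^{-1}\diag(u_1,\dots,u_m)V$ consists of the coefficients of the unique polynomial $p_j(x)$ of degree $\leq m-1$ satisfying $p_j(x_k)=u_k x_k^{j-1}$. Applying this to the entries of the $Y_i$, the limit exists precisely when there are polynomials $a(x),c(x)$ of degree $\leq m-1$ with $Y_k$ expressed as displayed before \eqref{pols}. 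Conversely, given such polynomials, the construction reverses to produce a well-defined point of $D_{2,m}(\mu_1,\mu_2)$. Since $D_{2,m}(\mu_1,\mu_2)$ is smooth (being a hyperk\"ahler quotient by a free action), it is in particular reduced, and $U$ is dense, so this curve-by-curve extension characterizes the variety globally.

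Finally, I would show that requiring $(a(x_k),c(x_k),x_k)\in D_{2,1}(\mu_1,\mu_2)$ for each root of $q(x)=\prod(x-x_k)$ is equivalent, for distinct $x_k$, to the polynomial congruence \eqref{pols}; by continuity and density this equivalence persists at colliding roots. Comparison with the general description \eqref{aff-Hilb} of the transverse Hilbert scheme of an affine variety immediately yields the corollary. The main obstacle is the limit analysis: one must argue that the holomorphic extension condition across $\{x_i=x_j\}$ really is polynomial interpolation of degree $\leq m-1$ and not merely some subspace of it; this is exactly what the Vandermonde inverse computation delivers, so no further subtlety is needed beyond the bookkeeping.
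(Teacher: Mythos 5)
Your proposal is correct and follows essentially the same route as the paper: block-diagonalisation of $(S,Y)$ over the locus of distinct $x_i$ via the Lagrange basis \eqref{f_i} and the anticommutation argument, the Vandermonde-inverse/interpolation analysis \eqref{condition} to handle coalescing roots, leading to the polynomial congruence \eqref{pols} and the comparison with \eqref{aff-Hilb}. The only additions are your explicit remarks on reducedness, density of the open stratum, and the converse direction, which the paper leaves implicit.
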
 

We can also conclude from the above proof that the hyperk\"ahler metric on $D_{2,m}(\mu_1,\mu_2)$ is the one given by the fibrewise transverse Hilbert scheme construction, described in \S\ref{fibre}, applied to the twistor space of $D_{2,1}(\mu_1,\mu_2)$. Indeed, in the case $\mu_1=\mu_2$, applying the above calculations fibrewise, shows that $q(x)$ defines a curve $C$ of degree $m$ in the total space of $\sO(4)$ over $\oP^1$ and $a(x)+c(x)\sqrt{x}$ defines a section $s(\zeta,x)$ of $L^2(2)$ over $C$ such that $(s(\zeta,x))+(s(\zeta,-x))=(\mu^2(\zeta)-x$. Comparing with the description of the twistor space of $D_{2,1}(\mu,\mu)$, given in \S\ref{metric}, proves the claim. Similarly, in the case $\mu_1\neq \mu_2$, $xc(x)-\tau(x)/2-a(x)\sqrt{x}$ defines an appropriate section of $L^2(4)$, and again, the cmparison with \S\ref{metric} shows that both constructions produce the same hyperk\"ahler metric. 

\medskip

{\boldmath $D_{1,m}(\mu_1,\mu_2)$ and \boldmath $D_{0,m}(\mu_1,\mu_2)$}. We proceed similarly. Let $(S,Y)$ be an element of either of these varieties in the canonical form described in Proposition \ref{can}. The characteristic polynomial $Q(z)$ of $S_0$ still has the form  $\prod_{i=1}^m (z^2-x_i)$ and, if we assume that the $x_i$ are distinct, we can pass from the basis of $\cx[z]/(P(z))$
described in the proof of that Proposition to the basis consisting of polynomials 
\begin{equation} f_1(z),zf_1(z),f_2(z),zf_2(z),\dots, f_m(z),zf_m(z),\quad f_i(z)=\prod_{j\neq i} (z^2-x_j),
\label{f_inew}\end{equation}
 plus $f_{2m+1}=Q(z)$  in the $D_{1,m}$-case (resp. $f_{2m+1}= (2\mu_2)^{-1}(z-\mu_1+\mu_2)Q(z), f_{2m+2}=-(2\mu_2)^{-1}(z-\mu_1-\mu_2)Q(z)$ in the  $D_{0,m}$-case). In this new basis $S$ and $Y$ are still of the form \eqref{D1m} or \eqref{D0m} with $S_0$ block-diagonal with blocks \eqref{S_i} and the covector $e$ is equal to $(0,1,0,1,\dots,0,1)$. The matrix $Y_0$ is then, owing to the argument given above for $D_{2,m}(\mu_1,\mu_2)$, also block-diagonal. It follows that the minor matrices consisting of a single block of $S_0$, the corresponding entries of the last row (or the last two rows in the $D_{0,m}$-case) and the corresponding entries of the last column (or, again, last two columns in the   $D_{0,m}$-case), and the analogous minor matrix for $Y$ are elements of $D_{1,1}(\mu_1,\mu_2)$ or $D_{0,1}(\mu_1,\mu_2)$. Now,  analogously to the $D_{2,m}$-case, we change the basis back to the one given in the proof of Proposition \ref{can} and conclude that $D_{1,m}(\mu_1,\mu_2)$ or $D_{0,m}(\mu_1,\mu_2)$ are 
described by the same  affine equations which define $D_{1,1}(\mu_1,\mu_2)$ or $D_{0,1}(\mu_1,\mu_2)$ but this time in $\cx[x]/(q(x)$, where $q(x)=\prod_{i=1}^m(x-x_i)$. Comparing with \eqref{aff-Hilb} and with example \ref{AH-Hilb} yields:
\begin{theorem} The variety $D_{j,m}(\mu_1,\mu_2)$, $j=0,1$, is isomorphic to the transverse Hilbert scheme $\bigl(D_{j,1}(\mu_1,\mu_2)\bigr)^{[m]}_\pi$ of $m$ points on the $D_{j}$-surface defined by equation \eqref{D1} or \eqref{D0} with $\pi$ being the projection onto the $x$-coordinate. In particular,  $D_{1,m}(\mu,\mu)$ is biholomorphic to the space of rational maps of degree $2n$ of the form $p(u)/q(u^2)$, $\deg p=2n-1$, $\deg q=n$, and satisfying $p(u)p(-u)=1 \mod q(u^2)$. 
\end{theorem}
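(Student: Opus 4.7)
The plan is to imitate the argument just given for $D_{2,m}(\mu_1,\mu_2)$, carefully accommodating the extra row and column in the canonical form \eqref{D1m} (for $j=1$), or the two extra rows and columns in \eqref{D0m} (for $j=0$).

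First I would take $(S,Y)\in D_{j,m}(\mu_1,\mu_2)$ in the canonical form provided by Proposition \ref{can}. By Proposition \ref{alg}, the characteristic polynomial of the $2m\times 2m$ block $S_0$ equals $Q(z)=\prod_{i=1}^m(z^2-x_i)$. On the dense open locus where the $x_i$ are pairwise distinct, I would change the basis of $\cx[z]/(P(z))$ from the one introduced in the proof of Proposition \ref{can} to the basis \eqref{f_inew}, supplemented by $Q(z)$ when $j=1$, and by the pair $(2\mu_2)^{-1}(z-\mu_1+\mu_2)Q(z)$, $-(2\mu_2)^{-1}(z-\mu_1-\mu_2)Q(z)$ when $j=0$. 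A direct computation using the identity $z^2f_i\equiv x_if_i+Q(z)\pmod{P(z)}$ shows that in this new basis $S_0$ becomes block-diagonal with $2\times 2$ blocks $S_i$ as in \eqref{S_i}, the covector $e$ in \eqref{D1m}--\eqref{D0m} becomes $(0,1,0,1,\dots,0,1)$, and the bottom-right scalar entries retain their form.

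Next I would apply the block-diagonality argument already established for $D_{2,m}(\mu_1,\mu_2)$ to the block $Y_0$. Since $S_0Y_0+Y_0S_0=\tau I$, every $2\times 2$ off-diagonal minor $Y_0^{ij}$ satisfies $S_iY_0^{ij}+Y_0^{ij}S_j=0$, which forces $Y_0^{ij}=0$ whenever $x_i\neq x_j$. Thus $Y_0$ is block-diagonal, and for each $i$ one obtains a minor matrix combining the $i$-th block $(S_i,Y_i)$ with the corresponding entries of the vectors $e$ and $v$ (and in the $D_{0,m}$-case a second pair of such entries) together with the scalar entries $(\mu_1\pm\mu_2)/2$; this minor matrix satisfies \eqref{SY} and the appropriate trace conditions, hence defines an element of $D_{j,1}(\mu_1,\mu_2)$.

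Then, reversing the change of basis via Vandermonde interpolation exactly as in the $D_{2,m}$ case, a curve in the open stratum $D_{j,m}(\mu_1,\mu_2)^o$ extends to $D_{j,m}(\mu_1,\mu_2)$ if and only if there exist polynomials of degree at most $m-1$ encoding the coordinates of the associated elements of $D_{j,1}(\mu_1,\mu_2)$; these polynomials must then satisfy the defining equation \eqref{D1} or \eqref{D0} modulo $q(x)=\prod(x-x_i)$. Comparing with \eqref{aff-Hilb} yields the identification with the transverse Hilbert scheme $\bigl(D_{j,1}(\mu_1,\mu_2)\bigr)^{[m]}_\pi$. The ``in particular'' clause then follows by noting that \eqref{D1} with $\mu_1=\mu_2=\mu$ reduces to $y^2x-z^2+1/4=0$, so that after rescaling $D_{1,1}(\mu,\mu)$ is the surface $X^2-xY^2=1$ of Example \ref{AH-Hilb}, whose transverse Hilbert scheme was identified there with the stated space of rational maps. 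The main technical obstacle I anticipate is verifying that, after the change of basis, the minor matrices really lie in $D_{j,1}(\mu_1,\mu_2)$ with the correct normalisation of the row vector $e$ and of the scalar entries $(\mu_1\pm\mu_2)/2$; once this bookkeeping is handled, everything else is parallel to the $D_{2,m}$ argument.
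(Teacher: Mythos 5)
Your proposal is correct and follows essentially the same route as the paper: put $(S,Y)$ in the canonical form of Proposition \ref{can}, pass on the open stratum of distinct $x_i$ to the basis \eqref{f_inew} (supplemented by $Q(z)$, resp.\ the two polynomials in the $D_{0,m}$-case) so that $S_0$ becomes block-diagonal with $e=(0,1,\dots,0,1)$, use $S_0Y_0+Y_0S_0=\tau$ to force $Y_0$ block-diagonal, identify the resulting minors as points of $D_{j,1}(\mu_1,\mu_2)$, and then undo the basis change to obtain the defining equations modulo $q(x)$, matching \eqref{aff-Hilb} and Example \ref{AH-Hilb}. Your explicit rescaling of \eqref{D1} at $\mu_1=\mu_2$ for the rational-maps clause is exactly the comparison the paper leaves implicit.
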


Once again, we can go through above calculations fibrewise on the twistor space of the $D_1$- or $D_0$-surface, and conclude that the hyperk\"ahler metric obtained from the slice construction of \cite{Bie1} coincides with the one obtained from the fibrewise transverse Hilbert scheme  construction described in \S\ref{fibre}. In particular, example \ref{tot} shows:
\begin{corollary} The hyperk\"ahler manifold  $D_{1,m}(\mu,\mu)$  is isometric to a totally geodesic submanifold of the moduli space of monopoles of charge $2n$, consisting of monopoles invariant under the involution $x\mapsto -x$ in $\oR^3$  and with total phase equal to $1$.\end{corollary}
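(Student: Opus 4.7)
The corollary is essentially a recombination of the theorem immediately above with Example \ref{tot}, so the plan is to line them up. First, I would specialise the preceding theorem to $\mu_1=\mu_2=\mu$, giving a biholomorphism $D_{1,m}(\mu,\mu)\cong \bigl(D_{1,1}(\mu,\mu)\bigr)^{[m]}_\pi$, and recall that the equation \eqref{D1} with $\mu_1=\mu_2$ reduces to $y^2x-z^2+\tfrac{1}{4}=0$, which is precisely the double cover of the Atiyah-Hitchin manifold. Thus the transverse Hilbert scheme being discussed is exactly the one treated in Example \ref{AH-Hilb} and, consequently, in Example \ref{tot}.

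Next I would upgrade this biholomorphism to an isometry. For this, one uses the second conclusion of the previous theorem: the hyperk\"ahler metric on $D_{1,m}(\mu,\mu)$ obtained from the slice construction of \cite{Bie1} coincides with the metric obtained by applying the fibrewise transverse Hilbert scheme construction (described in \S\ref{fibre}) to the twistor space of $D_{1,1}(\mu,\mu)$. On the other side, Example \ref{tot} identifies exactly this latter fibrewise construction, applied to the twistor space of the double cover of Atiyah-Hitchin, with a totally geodesic hyperk\"ahler submanifold of $\sM_{2n}$ --- the submanifold of monopoles symmetric under the antipodal map $x\mapsto -x$ on $\oR^3$ and of total phase $1$. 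Matching the two identifications (both are the fibrewise Hilbert scheme on the same twistor space) yields an isomorphism of twistor spaces with real structure, hence an isometry of the underlying hyperk\"ahler manifolds.

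The substantive step is really just the verification that the rational-map description in the theorem matches the rational-map description used in Example \ref{tot}. In the theorem one has rational maps of degree $2n$ of the special form $p(u)/q(u^2)$ with $p(u)p(-u)=1\bmod q(u^2)$; under the Donaldson identification of based rational maps with the monopole moduli space, the condition that the denominator depends only on $u^2$ is precisely the invariance of the spectral curve under $\lambda\mapsto -\lambda$ (which is the $\oR^3$-antipodal symmetry on the twistor side), while the normalisation $p(u)p(-u)=1$ (rather than an arbitrary nonzero constant) is exactly the ``total phase $=1$'' condition. Once this dictionary is set up, the fact that such monopoles form a totally geodesic submanifold is standard: it is the fixed-point set of the isometric involution on $\sM_{2n}$ induced by the antipodal map combined with the phase condition cut out by a momentum map for the asymptotic $U(1)$-action.

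The main obstacle, and really the only nontrivial check, is this dictionary between the algebraic conditions on the pair $(p,q)$ appearing naturally in the transverse Hilbert scheme and the geometric conditions (antipodal symmetry, phase $=1$) on the monopole. Everything else --- the biholomorphism, the coincidence of the two metric constructions, and the totally geodesic property as the fixed set of an isometry --- is immediate from results already established earlier in the paper.
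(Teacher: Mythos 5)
Your proposal is correct and follows essentially the same route as the paper: specialise the preceding theorem to $\mu_1=\mu_2$, use the coincidence of the slice metric with the fibrewise transverse Hilbert scheme metric on the twistor space of $D_{1,1}(\mu,\mu)$, and invoke Example \ref{tot}, which is exactly how the paper deduces the corollary (the paper delegates your ``dictionary'' step — antipodal symmetry of the spectral data and total phase $1$ — to Example \ref{tot} and the reference \cite{HMM}). The only small caveat is your phrasing that the phase condition is ``cut out by a momentum map'': the totally geodesic property comes from the subset being a union of components of the fixed-point set of the isometric involution induced by $x\mapsto -x$ (on which the total phase is automatically $\pm 1$), not from a moment map level set.
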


\appendix
\section{Completeness of hyperk\"ahler slices}

Let $G$ be a compact Lie group, $\g$ its Lie algebra, and $\langle\:,\:\rangle$ an invariant scalar product on $\g$. Let $\rho:\su(2)\to \g$ be a homomorphism of Lie algebras, and write $\alpha_i$, $i=1,2,3$ for the images of the standard generators of $\su(2)$.
We consider quadruples of $\g$-valued smooth functions $T_i(t)$ on $(0,1]$ such that at $t=0$ $T_0$ is smooth, while $T_1,T_2,T_3$ have simple poles with residues $\alpha_i$. The Nahm equations are a system of ordinary differential equations, consisting of $ \dot{T}_1+[T_0,T_1]=[T_2,T_3]$ and two further equations given by cyclic permutations of indices $1,2,3$. The group $\sG$ of smooth gauge transformations $g:[0,1]\to G$, $g(0)=g(1)=1$, acts on the set $\sZ(\rho)$ of solutions having the above boundary conditions and the quotient is a finite-dimensional smooth manifold, which we denote by $N(\rho)$, and which is diffeomorphic to $S(\rho)\times G^\cx$ (see \S\ref{slices} and \cite{Bie1}). The natural $L^2$-metric (with respect to  $\langle\:,\:\rangle$) on the infinite-dimensional manifold $\sZ(\rho)$ is preserved by the group $\sG$ and it induces a Riemannian metric on $N(\rho)$. This metric is hyperk\"ahler and it is the one used in the hyperk\"ahler slice construction of Theorem \ref{theorem}. There are 
two tri-hamiltonian group actions on $N(\rho)$: the group $G$ acts by allowing gauge transformations with arbitrary values at $t=1$; the group $H\subset G$, the Lie algebra of which is the centraliser of $\rho(\su(2))$, acts by allowing  gauge transformations $g(t)$ with $g(0)\in H$. 
\par
We are going to prove:
\begin{theorem} The natural $L^2$-metric on $N(\rho)$ is complete.
\end{theorem}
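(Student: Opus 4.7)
The strategy is to show that every Cauchy sequence $\{[T^{(n)}]\}_{n\geq 1}$ in $N(\rho)$ converges. I would begin by noting that the evaluation map $\ev_1\colon N(\rho)\to \g^{\oplus 4}$, $[T]\mapsto (T_0(1),T_1(1),T_2(1),T_3(1))$, is well-defined, since gauge transformations are trivial at $t=1$, and is Lipschitz with respect to the $L^2$-metric: a horizontal tangent vector $(\tau_0,\dots,\tau_3)$ along $\sZ(\rho)$ satisfies the linearised Nahm equations together with the gauge-fixing condition $\dot\tau_0+[T_0,\tau_0]+\sum_i[T_i,\tau_i]=0$, and a Cauchy-Schwarz estimate on $[1-\delta,1]$ controls $|\tau_i(1)|$ by the $L^2$-norm of $\tau$. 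Consequently the boundary values at $t=1$ of our sequence form a Cauchy sequence in $\g^{\oplus 4}$ and converge.

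Next, I would pick representatives $T^{(n)}\in\sZ(\rho)$ joined by horizontal minimising paths, so that the $L^2$-length along the path equals the distance in $N(\rho)$. On any compact subinterval $[\epsilon,1]\subset (0,1]$, Nahm's equations have quadratic right-hand sides in $(T_0,\dots,T_3)$. Starting from the uniformly bounded data at $t=1$ and integrating the horizontal variation of $T^{(n)}$ in the path parameter, Gronwall-type estimates yield uniform $C^0$-bounds on $T_i^{(n)}$ on $[\epsilon,1]$; bootstrapping through the equations upgrades these to uniform $C^k$-bounds on $[\epsilon,1]$ for every $k$.

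The main obstacle is the behaviour near $t=0$, where $T_1,T_2,T_3$ have prescribed simple poles with residues $\alpha_i$. Writing $T_i=\alpha_i/t+U_i(t)$, the regular parts $U_i$ satisfy a system whose leading singular part is governed by the operators $\ad\alpha_i$; decomposing $\g^\cx$ into weight spaces under $\rho\colon \ssl(2,\cx)\to\g^\cx$ yields explicit indicial roots on each weight summand, and the subspace of smooth solutions is cut out by the vanishing of the forbidden asymptotic modes. A weighted energy estimate for the horizontal variation, in the spirit of Biquard's analysis of Nahm moduli with poles, should show that $\sup_{t\in(0,1]}\|U_i^{(n)}(t)\|$ remains bounded uniformly in $n$, i.e.\ the regular parts do not drift to infinity as one approaches the pole.

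Combined with the $C^k$-bounds on compact subsets, Arzel\`a-Ascoli and ODE-regularity then give a subsequence converging in $C^\infty_{\mathrm{loc}}((0,1])$ to a smooth solution $T^{\infty}\in\sZ(\rho)$ having the correct pole structure at $t=0$, and the Cauchy property upgrades this to convergence of the whole sequence; thus $[T^{(n)}]\to [T^\infty]$ in $N(\rho)$. The technical heart is the third step: passing from an $L^2$-bound on tangent vectors to pointwise control of $T_i(t)$ uniformly down to $t=0$ requires the $\ssl(2)$-representation structure of $\g^\cx$ under $\rho$ to pick out the correct weighted function spaces and to rule out resonant modes in the linearised equations.
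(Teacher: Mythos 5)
Your overall strategy (uniform estimates on representatives, Arzel\`a--Ascoli, pass to a limit solution) is not the paper's route, and as written it has two genuine gaps. First, the step you yourself call the technical heart is only asserted, not proved: you say a weighted energy estimate ``should show'' that the regular parts $U_i^{(n)}$ stay uniformly bounded down to $t=0$. Nothing in the hypothesis of a Cauchy sequence for the $L^2$-quotient metric obviously yields such pointwise control near the pole, and even granting it, subsequential convergence in $C^\infty_{\mathrm{loc}}((0,1])$ does not by itself give convergence in the metric of $N(\rho)$: for that you need $T^{(n)}-T^\infty\to 0$ in $L^2$ of the whole interval (up to gauge), i.e.\ a uniform integrable envelope near $t=0$, which is again exactly the missing estimate. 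Second, your opening claim that $\ev_1$ is Lipschitz is not justified: Cauchy--Schwarz on $[1-\delta,1]$ controls $|\tau_i(1)|$ only if you also control $\dot\tau_i$ there, and via the linearised equations that requires a sup bound on the $T_i$ along the (varying) connecting paths --- precisely the kind of bound you are trying to establish. The evaluation map (essentially the hyperk\"ahler moment map for the $G$-action at $t=1$) is only locally Lipschitz, with constant depending on the point.

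The paper avoids all of this pointwise analysis by a soft argument. It realises $N(\rho)$ as the hyperk\"ahler quotient of the flat, complete Hilbert manifold $\sM(\rho)$ of quadruples ``fixed pole part plus $L^2$ remainder'' by the Hilbert Lie group of $W^{1,2}$ gauge transformations vanishing at both endpoints; the Hardy inequality guarantees the fundamental vector fields are in $L^2$, and the zero set $\sZ(\rho)$ of the moment map consists of \emph{weak} solutions of Nahm's equations. The key lemma is that any smooth curve in $N(\rho)$ lifts horizontally to $\sZ(\rho)$, which is achieved by solving a linear second-order boundary value problem for the gauge parameter. A Cauchy sequence in $N(\rho)$ then lifts to a Cauchy sequence in the flat complete space $\sM(\rho)$; its $L^2$-limit is automatically a weak solution, hence lies in $\sZ(\rho)$, and since the quotient map is a Riemannian submersion (so distance-nonincreasing), the classes converge in $N(\rho)$. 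Regularity and pole asymptotics of the limit are never needed, because the moduli space of weak solutions is already identified with $N(\rho)$. If you want to salvage your approach you must actually prove the uniform weighted estimate near $t=0$ and the $L^2$-convergence up to gauge; alternatively, reformulating the problem in the $L^2$/weak-solution framework as above makes completeness an inheritance from the ambient Hilbert space rather than a hard a priori estimate.
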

\begin{proof}
Our first goal is a suitable description of $N(\rho)$ as an infinite-dimensional hyperk\"ahler quotient. We start with an affine space $\sM(\rho)$ of quadruples $(X_0,S_1+X_1,S_2+X_2,S_3+X_3)$ of $\g$-valued functions on $(0,1]$, where   $S_i(t)=\frac{\alpha_i}{t}$ and $X_j\in L^2\bigl((0,1),\g\bigr)$, $j=0,1,2,3$. It is a flat hyperk\"ahler Hilbert manifold (modelled on  $L^2\bigl((0,1),\g\bigr)\otimes \oR^4$) consisting of $\g$-valued quadruples $(T_0,T_1,T_2,T_3)$ with prescribed boundary behaviour. The relevant group $\sG^\prime$ of gauge transformation has the Lie algebra consisting of maps $\phi:[0,1]\rightarrow \g$  of class $W^{1,2}$  satisfying $\phi(0)=\phi(1)=0$. The corresponding fundamental vector field $\tilde\phi$ is
$$ \left(-\dot\phi+[\phi,T_0], [\phi,T_1], [\phi,T_2],[\phi,T_3]\right).$$
Since $\phi(t)$ belongs to $W^{1,2}(0,1)$, it has the form $\int_0^t\kappa(\tau)d\tau$ for an $L^2$-function $\kappa$. The Hardy inequality implies then that $\frac{\phi}{t}$ is square-integrable, so that the vector field $\tilde\phi$ is indeed in $L^2\bigl((0,1),\g\bigr)\otimes \oR^4$. The action of $\sG^\prime$ is free, proper, and isometric. These conditions suffice to conclude that the hyperk\"ahler quotient of $\sM(\rho)$ by the Hilbert Lie group $\sG^\prime$ is a hyperk\"ahler manifold. The zero-level set $\sZ(\rho)$ of the hyperk\"ahler moment map consists of weak solutions to Nahm's equations. The arguments in \cite{Bie1} are still valid and imply that the moduli space of weak solutions to Nahm's equations in $\sM(\rho)$ modulo the group the Hilbert Lie group $\sG^\prime$ is isometric to $N(\rho)$.
\par
In order to prove completeness of $N(\rho)$, we need the following lemma.
\begin{lemma} Any smooth curve $\gamma:[0,a)\to N(\rho)$ can be lifted to a horizontal curve in $\sZ(\rho)$.\end{lemma}
\begin{proof}  Since $N(\rho)$ can be alternatively described as the space of weak solutions to Nahm's equations modulo $W^{1,2}$ gauge transformations {\em or} the space of smooth solutions modulo smooth gauge transformations, we can find a smooth lift $T:[0,a)\to \sZ(\rho)$ of $\gamma$ consisting of smooth solutions to Nahm's equations. We seek a smooth map $g:[0,a)\to \sG$ such that $(g(s).T(s))^\prime$ is horizontal for each $s$. This means that $T^\prime(s)+(g^{-1}(s)g'(s))\tilde{}$ is horizontal for each $s$, where $\tilde\phi$ denotes the fundamental vector field corresponding to a $\phi\in \Lie \sG$. If we write $T(s)=(T_0(t,s),T_1(t,s), T_2(t,s),T_3(t,s))$ and $T^\prime (s)=(t_0(t,s),t_1(t,s), t_2(t,s),t_3(t,s))$, then a $\phi$ such that $T^\prime(s)+\tilde\phi$ is horizontal is a solution of the following linear differential equation
$$ \ddot{\phi}-2[\dot\phi,T_0]-[\phi,T_0]+\sum_{i=1}^3[T_i,[T_i,\phi]]=\dot{t}_0+\sum_{i=0}^3 [T_i,t_i],$$
with $\phi(0)=\phi(1)=0$ and the dot denoting derivation with respect to $t$. The solution is a smooth map $\phi:[0,1]\times [0,a)\to \g$. We can solve for each $t$ the linear equation $g^{-1}g^\prime=\phi$ in the Lie group $G$ and this produces the desired curve $g:[0,a)\to \sG$.\end{proof}

To finish the proof of completeness, let  $(m_k)_{k\in \oN}$ be a Cauchy sequence in $N(\rho)$ with corresponding representatives $T^k=(T_0^k,T_1^k,T_2^k,T_3^k)$ in $Z(\rho)$. The above lemma implies that $(T^k)$ is a Cauchy sequence in $\sM(\rho)$ (i.e. the corresponding $(X_0^k,X_1^k,X_2^k,X_3^k)$ constitute a Cauchy sequence in $L^2\bigl((0,1),\g\bigr)\otimes \oR^4$) and it has therefore a limit $T^\infty$ in $\sM(\rho)$. This limit is weak solution to Nahm's equations and hence $T^\infty\in \sZ(\rho)$. Since Riemannian submersions between Hilbert manifolds shorten distances,  $m_k$ converges in $N(\rho)$ to the equivalence class of $T^\infty$.
\end{proof}

\end{document}